\DeclareSymbolFont{cyrletters}{OT2}{wncyr}{m}{n}
\DeclareMathSymbol{\Sha}{\mathalpha}{cyrletters}{"58}
\newcommand{\bG}{{\mathbb{G}}}
\newcommand{\bQ}{{\mathbb{Q}}}
\newcommand{\bZ}{{\mathbb{Z}}}
  \newcommand{\A}{{\mathcal{A}}}
  \newcommand{\C}{{\mathcal{C}}}
  \newcommand{\G}{{\mathcal{G}}}
  \newcommand{\I}{{\mathcal{I}}}
\renewcommand{\L}{{\mathcal{L}}}
\renewcommand{\O}{{\mathcal{O}}}
\renewcommand{\P}{{\mathcal{P}}}
\renewcommand{\S}{{\mathcal{S}}}
\newcommand{\fp}{\mathfrak{p}}
\newcommand{\fm}{\mathfrak{m}}
\newcommand{\fn}{\mathfrak{n}}
\newcommand{\Hom}{\operatorname{Hom}}
\newcommand{\spec}{\operatorname{Spec}}
\newcommand{\fD}{\mathfrak{D}}
\newcommand{\iden}{\operatorname{id}}
\newcommand{\Ext}{\textnormal{Ext}}
\newcommand{\upchi}{{\raise.35ex\hbox{$\chi$}}}
\newcommand{\ra}{\rightarrow}
\newcommand{\QC}{\operatorname{QC}_\infty}
\newcommand{\Dbul}{\textnormal{D}^\bullet}
\newcommand{\Db}{\textnormal{D}^{\textnormal{b}}}
\newcommand{\dua}{\mathfrak{D}}
\newcommand{\eval}{\operatorname{eval}}
\newtheorem{theorem}{Theorem}[section]
\newtheorem{corollary}[theorem]{Corollary}
\newtheorem{proposition}[theorem]{Proposition}
\newtheorem{lemma}[theorem]{Lemma}
\theoremstyle{definition}
\newtheorem{definition}[theorem]{Definition}
\newtheorem{example}[theorem]{Example}
\newtheorem{remark}[theorem]{Remark}
\newtheorem*{theorem*}{Theorem}
\newtheorem*{proof of (1)}{proof of (1)}
\newtheorem*{proof of (2)}{proof of (2)}
\newtheorem*{mainTheoremA}{Main theorem}
\newtheorem{informal question}[theorem]{Informal Question}
\newif\ifhascomments \hascommentstrue
  \newcommand{\brett}[1]{{\color{blue}[[\ensuremath{\textnormal{Brett}: } #1]]}}
  \newcommand{\brett}[1]{}
  \newcommand{\Aji}[1]{}
\numberwithin{equation}{section}
\begin{document}

\title{Mukai Duality for abelian stacks}

\author[A. Dhillon]{Ajneet Dhillon}
\address{Department of Mathematics, Western University, Canada}
\email{adhill3@uwo.ca}

\author[B. Nasserden]{Brett Nasserden}
\address{Department of Mathematics, Western University, Canada}
\email{bnasserd@uwo.ca}

\maketitle

\begin{section}{Introduction}
As is well known \cite{FM}, for an abelian variety $A$ and its dual $A^t$ there is an equivalence of derived categories
\[D^b(A)\cong D^b(A^t)\]
given by a Fourier-Mukai transform with kernel the normalized Poincare bundle on $A\times A^t$. In this article, we will show that an analogous statement holds for tame abelian stacks. In \cite{Brochard1} abelian stacks were introduced to construct albanese morphisms of algebraic stacks; they may be thought of as stacky generalizations of abelian varieties. Every abelian stack $\A$ has a dual $\dua(A)$ and the product $\A\times \dua(\A)$ has a well-defined Poincare bundle. We first show that tame abelian stacks are perfect in the sense of \cite{bfn} in \ref{prop:abStPerfect} and conclude that $\QC(\A)$ (the stable $\infty-$category of quasi-coherent sheaves on $\A$) is self-dual by \cite[corollary 4.8]{bfn}. We then explicitly verify that $\QC(\dua(\A))$ is dual to $\QC(A)$ by a direct computation with the Fourier-Mukai transform induced by the Poincare bundle on $\A\times \dua(\A)$. The benefit of this approach is that it is equivalent to showing that a composition of morphisms
\[\QC(\A)\ra \QC(\A)\times \QC(
\dua(\A))\times \QC(\A)\ra \QC(\dua(\A))\]
is the identity, and this calculation can be made in a way analogous to Mukai's computation in \cite{FM}.

Every abelian stack is a strict commutative group stack in the sense of Deligne, see \cite[XVIII 1.4]{SGA4Tome3} where they are called Picard stacks. Commutative group stacks can be thought of as an enlargement of the category of sheaves of commutative groups, while algebraic commutative group stacks can be thought of as an enlargement of the category of commutative group schemes. Deligne's construction provides two alternative ways of thinking of commutative group stacks. Firstly, one may think of them as a stack $\G$ along with an addition law $m\colon \G\times \G\ra \G$ that satisfies the natural 2-categorical analogues of the commutative group scheme axioms. Alternatively, one may work with two-term complexes of sheaves of abelian groups $[G^{-1}\ra G^0]$. The correspondence
$[G^{-1}\ra G^0]\mapsto [G^0/G^{-1}]$
provides an equivalence of categories between two-term complexes of abelian sheaves and commutative group stacks on $S$. Here $[G^0/G^{-1}]$ is the quotient stack. See \cite[Theorem 2.5]{Brochard1} or \cite[1.4]{SGA4Tome3} for the precise nature of these equivalences.  

The history of Fourier-Mukai style duality theorems for commutative group stacks can be traced back to \cite{TFib} where Arinkin credits Be\u{\i}linson for teaching him about duality for commutative group stacks. In \cite[4.3]{TFib} a Fourier-Mukai equivalence between certain gerby genus 1 surfaces is proved. In \cite[Appendix by Arinkin]{TFib} the question of a general Fourier-Mukai-style duality between the bounded derived categories of a commutative group stack and its dual was raised. After \cite{TFib} Fourier-Mukai duality theorems for commutative group stacks were applied in \cite{DModcharp} to study $\mathcal{D}$-modules. Finally in \cite[A.3]{GeoLang} Fourier-Muakai duality for Be\u{\i}linson 1-motives in the fpqc-topology was obtained. Our contribution is the following addition.

\begin{mainTheoremA}\label{mainthmB}\label{mainthm}
Let $\A\ra S$ be an abelian stack on a connected base scheme $S$ with $A$ its coarse moduli space. Suppose that $\A\ra A$ is a good moduli space. Let $\QC(\A)$ be the stable infinity category of quasi-coherent sheaves on $\A$ and $\Dbul(\A)$ the unbounded derived category of quasi-coherent sheaves on $\A$ and $\Db(\A)$ the bounded derived category of sheaves on $\A$. Then we have that
\begin{align}
	\QC(\A)&\cong\QC(\dua(\A)).\label{eq:mainQC}\\
	\Dbul(\A)&\cong\Dbul(\dua(\A)).\label{eq:mainD}\\
	\Db(\A)&\cong\Db(\dua(\A))\label{eq:mainDb}.
	\end{align}	
\end{mainTheoremA}
Both of \ref{eq:mainD} and \ref{eq:mainDb} follow from \ref{eq:mainQC}, so $\QC(\A)\cong\QC(\dua(\A))$ is the fundamental result.\\
Our approach has the following steps.

\begin{enumerate}
	\item Prove that $\QC(\A)=\QC(\dua(\A))$.
	\item We conclude that $\Dbul(\A)=\Dbul(\dua(\A))$ by taking homotopy and that the equivalence is given by a Fourier-Mukai transform.
	\item We conclude that $\Db(\A)=\Db(\dua(\A))$ by verifying that this functor preserves bounded complexes. 
\end{enumerate}	
The first step is the most crucial. The structure morphism $\A\ra S$ allows us to consider $\QC(\A)$ as a sort of algebra over $\QC(S)$. We find that for an Abelian stack $\A\ra S$ that $\A$ is perfect, see \ref{thm:perfbro}. By \cite{bfn} the morphism $\A\ra S$ is perfect and $\QC(\A)$ is self-dual over $\QC(S)$. We then show that $\QC(\dua(\A))$ is dual to $\QC(\A)$ by constructing an explicit duality using a stacky Poincare sheaf on $\A\times \dua(\A)$. So, we have
\begin{equation}
	\xymatrix{\QC(\A)\ar[r]^{\cong}_{\textnormal{duality}} & \QC(\A)^\vee\ar[r]^{\cong}& \QC(\dua(\A))}
\end{equation}
This gives the desired equivalence; we then verify directly that it preserves bounded complexes. We also construct new examples of tame abelian stacks over non-closed fields for which our theorem applies and show that every tame-abelian stack over an algebraically closed field is trivial. One source of tame abelian stacks over a non-closed field $k$ are non-zero points in the group $A(k)/nA(k)$ where $A$ is an abelian variety over $k$. We consider $A(k)/nA(k)$ as a subgroup of $\Ext^1(\bZ/n\bZ,A)$ so that each point corresponds to an extension $0\ra A\ra G\ra \bZ/n\bZ \ra 0$. In such a situation, $\dua(G)$ is an abelian stack. 

In both algebraic geometry and topology, commutative group stacks and related notions have a number of important applications. In algebraic geometry, Behrend and Fantechi used commutative group stacks to define virtual fundamental classes, which are a foundational part of modern enumerative geometry, in particular, Gromov-Witten theory and Donaldson-Thomas theory. See \cite{MR1437495} for the construction. Commutative group stacks also arise in situations where duality is present. 
\begin{definition}{Definition 3.1 in \cite{Brochard1}}\label{def:dualGroup}
Let $\G$ be a commutative group stack defined over $S$. Define
\[\fD(\G):=\hom_{S-\textnormal{grp}}(\G,B\bG_m).\]
\end{definition}
This notion of duality generalizes both the classical Cartier duality and the biduality of abelian varieties. Deligne's 1-motives in \cite{MR498551}  provide further examples of commutative group stacks; moreover, Deligne's definition of the dual motive is the dual commutative group stack. Commutative group stacks have been used by Be\u{\i}linson and Bernstein in \cite{MR1237825} in relation to the Jantzen conjectures. In \cite{Brochard1} Brochard used commutative group stacks to generalize the albanese morphism in the context of algebraic stacks. 

Fix a base scheme $S$ and work in the fppf topology. Recall that every commutative group stack $\G$ arises as a quotient stack $\G=[G^0/G^{-1}]$ where $[G^0\ra G^{-1}]$ is a two-term complex of abelian sheaves on $S$. Associated to the two-term complex are its cohomology in degree $-1$ and $0$ which we denote $H^0(\G), H^{-1}(\G)$.  Geometrically $H^0(\G)$ is moduli $\emph{sheaf}$ of isomoprhism classes of objects in $\G$  defined by the sheafication of the pre-sheaf $T\ra \G(T)/\textnormal{isomorphism}$. Note that in essentially all cases we will consider $H^0(\G)$ will be representable by a commutative group scheme, and will often be a good moduli space in the sense of Alper for $\G$. The fact that $\A\ra H^0(\G)$ is a good moduli space will be a consequence of our assumption of tameness. On the other hand, $H^{-1}(\G)$ is the isomorphism group of the identity section of $\G$.
\begin{definition}[Brochard: See Proposition 2.14]\label{def:AbelianStack}
An \emph{abelian stack} is a commutative group stack $\A\ra S$ such that satisfies the following equivalent conditions.
\begin{enumerate}
    \item $\A\ra S$ is an algebraic, proper, flat, finitely presented with connected and reduced geometric fibres. Its inertia stack is finite, flat, and of finite presentation over $\G$.
    \item $H^0(\A)\ra S$ is an abelian scheme and $H^{-1}(\A)\ra S$ is a finite, flat, and finitely presented commutative group scheme.
\end{enumerate}
\end{definition}
\begin{example}
Let $A\ra \spec k$ be an abelian variety on a field $k$ equipped with a group homomorphism $\delta\colon F\ra A$ where $F$ is a finite commutative group scheme over $k$. Then $F$ acts on $A$ via $\delta$ and the quotient stack \begin{equation}
\A:=[A/F]\ra \spec k \end{equation} is an example of an abelian stack. 
\end{example} 
To compute the dual of an abelian stack, Brochard introduced the following notion. 

\begin{definition}[Duabelian groups: See definition 2.17 in \cite{Brochard1}]\label{def:duabelianGr}
Let $G$ be a sheaf of abelian groups on $S$. We say that $G$ is a \emph{duabelian} group if there is a finite flat and finitely presented commutative group scheme $F\ra S$ and an abelian scheme $A\ra S$ such that $G$ is an extension of $F$ by $A$. In other words, there is an exact sequence of commutative groups 
\begin{equation*}
0\ra A\ra G\ra F\ra 0.
\end{equation*}
In particular, a duabelian group is a commutative group scheme. 
\end{definition}
Brochard proved that the duabelian groups and abelian stacks are \emph{dual} to one another in the following sense.
\begin{theorem}[Abelian stacks and duabelian groups are dual: See Theorem 3.17 in \cite{Brochard1}]\label{thm:AbduaDuality}
Let $\A\ra S$ be an abelian stack and $G\ra S$ a duabelian group. Then $\dua(\A)$ is a duabelian group and $\dua(G)$ is an abelian stack. In particular, if $\A$ is an abelian stack with $H^0(\A)=M$ and $H^{-1}(\G)=F$ then we have a short exact sequence
\begin{equation}
0\ra M^t\ra \dua(\G)\ra F^D\ra 0     
\end{equation}
where $M^t\ra S$ is the dual abelian scheme and $F^D=\hom_{\textnormal{hom-grp}}(F,\bG_m)$ is the Cartier dual of $F$.
\end{theorem}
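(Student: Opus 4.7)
The plan is to exploit the canonical presentation of an abelian stack $\A$ as an extension of commutative group stacks
\[0 \to BF \to \A \to M \to 0,\]
where $F = H^{-1}(\A)$ is a finite flat commutative group scheme and $M = H^0(\A)$ is the coarse moduli abelian scheme. Under Deligne's dictionary between strict Picard stacks and 2-term complexes of abelian fppf sheaves on $S$, this extension corresponds to the distinguished triangle $F[1] \to \A^\bullet \to M \xrightarrow{+1}$ in $D^{[-1,0]}(S_{\text{fppf}})$. Applying the derived functor $\RHom(-, \bG_m[1])$, which is the source of the duality $\dua(-) = \hom_{S\text{-grp}}(-, B\bG_m)$, produces an exact triangle whose long exact sequence of cohomology sheaves is the engine of the proof.

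First, I would identify the duals of the extremal terms. For the abelian scheme $M$, one has $\underline{\Hom}(M, \bG_m) = 0$ (no maps from a proper connected scheme to an affine group) and $\underline{\Ext}^1(M, \bG_m) = M^t$ (the standard $\Pic^0$ identification of the dual abelian scheme). For the finite flat $F$, working in the fppf topology gives $\underline{\Hom}(F, \bG_m) = F^D$ with higher Exts vanishing by the standard result on $\RHom(F, \bG_m)$ for finite flat commutative group schemes. Plugging these into the long exact sequence obtained from the triangle above and using that $\RHom(BF, \bG_m[1]) = \RHom(F[1], \bG_m[1]) = \RHom(F, \bG_m)$ yields a short exact sequence
\[0 \to M^t \to \dua(\A) \to F^D \to 0,\]
exhibiting $\dua(\A)$ as an extension of a finite flat commutative group scheme by an abelian scheme, i.e.\ as a duabelian group.

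The converse direction, showing $\dua(G)$ is an abelian stack when $G$ is duabelian, proceeds dually: applying $\RHom(-, \bG_m[1])$ to $0 \to A \to G \to F \to 0$ gives a long exact sequence whose cohomology sheaves compute $H^{-1}(\dua(G))$ and $H^0(\dua(G))$. From $\underline{\Hom}(F, \bG_m) = F^D$, $\underline{\Ext}^1(A, \bG_m) = A^t$, and the vanishings above, one obtains $H^{-1}(\dua(G)) = F^D$ (finite flat) and $H^0(\dua(G)) = A^t$ (abelian), which is precisely the characterisation of an abelian stack given in Definition~\ref{def:AbelianStack}.

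The main obstacle is verifying the exactness and vanishings required to cut the long exact sequence down to the clean short exact sequence above; specifically, one needs $\underline{\Ext}^i_{\text{fppf}}(F, \bG_m) = 0$ for $i \geq 1$ and $\underline{\Ext}^2_{\text{fppf}}(M, \bG_m) = 0$. The former is a relatively standard local computation using the fact that $F$ is fppf-locally the Cartier dual of a finite flat group scheme, so $\RHom(F, \bG_m)$ is concentrated in degree zero. The latter, the vanishing of $\underline{\Ext}^2$ for an abelian scheme against $\bG_m$ in the fppf topology, is the subtle ingredient and is where one must invoke Breen's results (or equivalent Oort--Tate style arguments) on the structure of $\Ext$-sheaves of abelian schemes. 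Once these vanishings are in hand, the argument is essentially formal manipulation of the long exact sequence associated to the defining triangle.
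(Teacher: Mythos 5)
The paper does not reprove this statement; both occurrences of Theorem~\ref{thm:AbduaDuality} are quoted directly from \cite[Theorem 3.17]{Brochard1}, and Theorem~\ref{thm:dualAbStack} simply refers to ``the proof of \cite[Theorem 3.17]{Brochard1}.'' So the relevant comparison is against Brochard's argument, and your outline does track it: you identify $\A$ with the canonical two-step filtration $BF\subset \A$, realise this as the truncation triangle $F[1]\to\A^\bullet\to M\xrightarrow{+1}$ under Deligne's dictionary (Remark~\ref{r:deligne}), apply $\RHom(-,\bG_m[1])$, and read off the cohomology sheaves. The inputs $\underline{\Hom}(M,\bG_m)=0$, $\underline{\Ext}^1(M,\bG_m)=M^t$ (Weil--Barsotti), and $\underline{\Hom}(F,\bG_m)=F^D$ with $\RHom(F,\bG_m)\simeq F^D$ concentrated in degree $0$ are exactly the ones Brochard uses.

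Two points deserve more care than your sketch gives them. First, for the ``in particular'' short exact sequence $0\to M^t\to\dua(\A)\to F^D\to 0$, the long exact sequence produces only
\[
0\to M^t\to H^0(\dua(\A))\to F^D\xrightarrow{\ \delta\ } \underline{\Ext}^2(M,\bG_m),
\]
so you genuinely need $\delta=0$, not merely a plausibility argument; the clean way is to invoke the vanishing $\underline{\Ext}^2_{\mathrm{fppf}}(M,\bG_m)=0$ for an abelian scheme. You correctly flag this as the subtle ingredient, but note that the result you want is Breen's vanishing theorem for $\underline{\Ext}^i(A,\bG_m)$ in low degrees (his 1975 \emph{th\'eor\`eme d'annulation}), which is a different paper from the Breen reference \cite{Breen} used elsewhere in this article for $\Ext^1(\bZ/n,A)$. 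Second, and symmetrically, the converse direction ($\dua(G)$ an abelian stack) needs not only $\underline{\Ext}^1(F,\bG_m)=0$ but also that the connecting map $A^t\to\underline{\Ext}^2(F,\bG_m)$ vanishes, i.e.\ that $\RHom(F,\bG_m)$ is concentrated in degree $0$ through degree $2$ at least; this is again a genuine theorem (Cartier biduality for finite flat group schemes in the fppf topology) and should be cited rather than waved at as ``a relatively standard local computation.'' With those two references pinned down, your argument is complete and is essentially Brochard's.
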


We now explain the relationship between our work and that in \cite{GeoLang} which has two main forms.

\begin{enumerate}
    \item In \cite{GeoLang} they work with fpqc stacks while we work with the fppf topology.
    \item There are Abelian stacks that are not obviously Be\u{\i}linson 1-motives.
\end{enumerate}
Working with the fppf topology has the benefit of avoiding issues with sheafication and stackfication in the fpqc topology, which need not exist without introducing extra set-theoretic complications. See \cite[Page 30, Tag 020K]{stacks-project} for a related discussion and \cite[Theorem 5.5]{Wat75} for an example of a fpqc presheaf with no sheafication. As sheafication and stackification play an important role in the theory of commutative group stacks, it seems worthwhile to prove statements without resorting to the fpqc-topology. For example, in Deligne's correspondence between two term complexes $[d\colon G^{-1}\ra G^0]$ one first forms the quotient pre-stack $[G^0/G^{-1}]^{\textnormal{pre}}$ and then takes $[G^0/G^{-1}]$ to be the stackfication of the pre-stack whose objects are $G^0(U)$ and morphisms between $x,y\in G^0(U)$ those $f\in G^{-1}(U)$ with $df=y-x$. On the other hand, starting with a commutative group stack $\G$ one uses sheafification to define $H^0(\G)$ as the sheaf associated with the presheaf $U\mapsto \G(U)/\textnormal{isomorphism}$.  

\begin{definition}[Be\u{\i}linson 1-motives. See A.4 in \cite{GeoLang}]\label{def:Bel1Motive}
A Be\u{\i}linson 1-motive over $S$ is an fpqc-commutative group stack $\G\ra S$ with a filtration
\[0\subseteq \G_0\subseteq \G_1\subseteq \G_2=\G\]
such that 
\begin{enumerate}
    \item $\G_2/G_1=\Gamma$ where $\Gamma$ is locally a constant group on $S$.
    \item $\G_1/\G_0$ is an abelian scheme over $S$.
    \item $\G_0=BG$ where $G=(\Gamma^\prime)^D=\hom(\Gamma^\prime,\bG_m)$ for some finitely generated abelian constant group $\Gamma^\prime$ on $S$.
\end{enumerate}
Here $\G_i\subseteq \G_{i+1}$ means that $\G_i\ra \G_{i+1}$ is a homomorphism of commutative group stacks which is a fully faithful embedding. 
    
\end{definition}

If $\A\ra S$ is an abelian stack with $H^{-1}(\A)=F$ where $F$ is a finite diagonalizable group, then we have a short exact sequence 
\[0\ra BF\ra \A\ra H^0(\A)\ra 0\]
by \cite[Example 2.13]{Brochard1}. Therefore, when $H^{-1}(\A)$ diagonalizable $\A
\ra S$ is a Be\u{\i}linson 1-motive as a stack in the fpqc topology and therefore the derived bounded categories of quasi-coherent sheaves on $\A$ and $\dua(\A)$ are equivalent by the Fourier-Mukai transform (on the derived bounded category in the fpqc topology) determined by the Poincare bundle by \cite[Theorem A.4.6]{GeoLang}. However, not every abelian stack is a Be\u{\i}linson 1-motive in this way. See examples \ref{ex:tamexample2} and \ref{ex:tameex3} for constructions of non-trivial commutative group stacks with $H^{-1}(\A)$ a constant group scheme. 

\end{section}

\begin{section}{Preliminaries}

\begin{subsection}{Basics of Commutative Group stacks}\label{sec:CGSBasics}

Here, we review the notions of a commutative group stack. The main references are \cite{Brochard1} and \cite[XVIII 1.4]{SGA4Tome3}. We review the basic definitions for the convenience of the reader.	
	
\begin{definition}[Definition of Picard category]\label{def:PicardCategory}
A Picard category is a groupoid $G$ equipped with the following data.
\begin{enumerate}
	\item A functor \begin{equation*}
	+\colon G\times G\ra G
	\end{equation*}
\item A natural isomorphism of functors  \begin{equation*}
\lambda\colon +\circ (+,\textnormal{id}_G)\ra +\circ (\textnormal{id}_G,+) 
\end{equation*}
where $+\circ (+,\textnormal{id}_G)\colon G\times G\times G\ra G\times G\times G$ is the functor defined by $(x,y,z)\mapsto (x+y)+z$, and $+\circ (\textnormal{id}_G,+) $ is defined analogously.
\item A natural isomorphism of functors \begin{equation*}+\ra +\circ s
\end{equation*}
where $s\colon G\times G\ra G\times G$ is the natural switching functor defined by $s(x,y)=(y,x)$.
\end{enumerate}
These data are required to satisfy the following laws.

\begin{itemize}
	\item The hexagon and pentagon axioms. See \cite[XVIII 1.4.1]{SGA4Tome3} for the details. 
	\item For any objects $x,y$ of $G$ we have  \emph{equalities}
	\begin{align*}
		\tau_{y,x}\circ \tau_{x,y}&=\textnormal{id}_{x+y}\\
		\tau_{x,x}&=\textnormal{id}_{x+x}
	\end{align*}
\item For any object $x\in G$ the translation functor $t_y\colon G\ra G$ given by $t_x(y)=x+y$ is an equivalence of categories.
\end{itemize}
\end{definition}
We may now define a commutative group stack. Roughly speaking, this is stack $\G$ over some base scheme which is \emph{fibered in Picard categories}.

 \begin{definition}[Definition of a commutative group stack]\label{def:CGS}
Let $S$ be a base scheme. A commutative group stack over $S$ is a stack $\G\ra S$ equipped with the following data.

\begin{enumerate}
	\item A morphism of stacks 
	\begin{equation*}
		+\colon \G\times_S \G\ra \G
	\end{equation*}
\item two isomorphisms $\lambda$ and $\tau$ defined analogously to $\lambda$ and $\tau$ in definition \ref{def:PicardCategory} such that for any test scheme $T\ra S$ we have that $\G(U)$ with the induced restrictions of $+\vert_U,\lambda\vert_U,\tau\vert_U$ is a Picard category. 
\end{enumerate}
\end{definition}
To define the dual commutative group stack, we require the notion of a homomorphism. 
\begin{definition}[The definition of a homomorphism of commutative group stacks]
Let $\G_1,\G_2$ be commutative group stacks over a base scheme $S$. A homomorphism $f\colon\G_1\ra \G_2$ is a morphism of stacks along with a choice of a 2-isomorphism \begin{equation*}
	\alpha_f\colon f\circ +_{\G_1}\Rightarrow +_{\G_2}\circ (f,f)
\end{equation*}

In addition, the 2-morphism $\alpha_f$ is required to satisfy some coherence conditions. See \cite[Definition 2.4]{Brochard1} and/or \cite[XVIII 1.4.6]{SGA4Tome3}. We let $\hom_{\textnormal{grp}}(\G_1,\G_2)$ be the stack of all homomorphisms of commutative group stacks.
\end{definition}

\begin{remark} \label{r:deligne}
	There is a 2-category of commutative group stack. The associated 1-category of commutative group stacks has objects commutative commutative group stacks 
	and morphisms isomorphism classes of morphisms in the 2-category of commutative group stacks. The associated 1-category has a simple description in terms of 
	two term complexes of sheaves of abelian groups concentrated in degrees -1 and 0. Given such a complex, $I^{-1}\rightarrow I^0$, the quotient stack 
	$[I^0/I^{-1}]$ is a commutative group stack. This construction induces and equivalence between the associated 1-category of commutative group stacks and 
	the derived category $D^{[-1,0]}(S)$ of abelian sheaves on $S$. See \cite[XVII 1.4.14 and 1.4.17]{SGA4Tome3} for more details. 
\end{remark}

\begin{example}\label{ex:classStack}
Let $G$ be a commutative group scheme over $S$. The addition on $G$ induces a canonical commutative group stack structure on the classifying stack $BG\ra S$. 
Alternatively, this follows from the prior remark by considering the complex concentrated in degree -1 with value $G$.
\end{example}

Following \cite{Brochard1} we denote by $\Hom_{cgs}(G,H)$ the groupoid of homomorphisms in the 2-category of commutative group stacks between the
commutative group stacks $G$ and $H$.

\begin{definition}[The definition of the dual commutative group stack: See definition 3.1 in \cite{Brochard1}]
Let $\G$ be a commutative group stack over $S$. The dual commutative group stack is defined to be 
\begin{equation}
	\dua(\G):=\Hom_{\textnormal{grp}}(\G,B\bG_m).
\end{equation}	
It assigns to the $S$ scheme $U$ the groupoid $\Hom_{cgs}(\G\times_S U, \bG_m\times_S U)$.
\end{definition}

\begin{remark}\label{rem:idrem}
Commutative group stacks are always equipped with a pair $(e,\epsilon)$ $e\colon S\ra \G$ is a section of the natural structure map $\pi\colon \G\ra S$ and $\epsilon\colon e+e\rightarrow e$ an isomorphism. This data is unique up to unique isomorphism. There is also a natural inverse involution $-\colon G\ra G$ that is unique up to unique isomorphism. See the references in \cite[Remark 2.3]{Brochard1} for the details. 
\end{remark}

\begin{definition}[The coarse moduli sheaf and automorphism group of the neutral section.]\label{def:H0}
Let $\G\ra S$ be a commutative group stack over $S$ with neutral section $e_\G\colon S\ra \G$.

\begin{itemize}
	\item Let $H^0(\G)$ be the fppf sheafication of the fppf-pre sheaf defined by the rule
	\begin{equation}
		U\mapsto \G(U)/\textnormal{isomorphism}.
	\end{equation}
\item Define $H^{-1}(\G)$ as the following fiber product:
\begin{equation}
	\xymatrix{H^{-1}(\G)\ar[r]\ar[d]&I_\G\ar[r]\ar[d] & \G\ar[d]^\triangle\\S\ar[r]_{e_\G}& \G\ar[r]_{\triangle} & \G\times_S \G}
\end{equation}
\end{itemize}
Recall from \ref{r:deligne} that commutative group stacks correspond to two term complexes $[G^{-1}\ra G^0]$. 
The notation $H^0(\G)$ stems from the fact that in this language $H^0(\G)$ is the cohomology in degree $0$ and $H^{-1}(\G)$ is the cohomology in degree $-1$.\end{definition}

\begin{proposition}
	Let $\G$ be a commutative group stack over $S$. The following are equivalent
	\begin{enumerate}
		\item $\G$ is algebraic, proper, flat and of finite presentation with conencted and reduced geometric fibers. Further $I_{\G}$, the inertia 
		the inertia stack of $\G$, is finite, flat and of finite presentation over $\G$.
		\item $H^{-1}(\G)$ is a finite, flat and finitely presented group scheme over $S$ and $H^0(\G)$ is an abelian scheme.
	\end{enumerate}
\end{proposition}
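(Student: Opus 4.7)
My approach is to leverage Deligne's correspondence from Remark \ref{r:deligne} to write $\G \simeq [G^0/G^{-1}]$ for a two-term complex $[G^{-1}\to G^0]$ of abelian sheaves, together with the canonical short exact sequence
\[
0 \to BH^{-1}(\G) \to \G \to H^0(\G) \to 0
\]
of commutative group stacks coming from the truncation triangle of that complex. This sequence presents $\G \to H^0(\G)$ as an fppf gerbe banded by $H^{-1}(\G)$. Moreover, since $\G$ is a group stack, translation by points gives a canonical identification $I_{\G} \simeq \G \times_S H^{-1}(\G)$, so the inertia is controlled entirely by $H^{-1}(\G)$.

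For $(2) \Rightarrow (1)$: Assuming $H^0(\G) \to S$ is an abelian scheme and $H^{-1}(\G) \to S$ is finite flat of finite presentation, the classifying stack $BH^{-1}(\G) \to S$ inherits algebraicity, properness, flatness, and finite presentation from $H^{-1}(\G)$. Since $\G \to H^0(\G)$ is fppf-locally isomorphic to $BH^{-1}(\G)_T \to T$ and $H^0(\G) \to S$ is an abelian scheme, each of the properties algebraic/proper/flat/finitely presented transfers to $\G \to S$ by composition. Geometric connectedness of the fibers of $\G$ passes along the surjection $\G \to H^0(\G)$ from the abelian fibers below, and reducedness of the geometric fibers follows from the fppf-local description of the gerbe. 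Finally the inertia claim is then the base change $I_{\G} \simeq \G \times_S H^{-1}(\G)$, which is finite flat of finite presentation over $\G$.

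For $(1) \Rightarrow (2)$: By Definition \ref{def:H0}, $H^{-1}(\G)$ is the pullback of $I_{\G} \to \G$ along the unit section $e_{\G}\colon S \to \G$; since $I_{\G} \to \G$ is finite flat of finite presentation by hypothesis, so is $H^{-1}(\G) \to S$. For $H^0(\G)$, the finite flat inertia of $\G$ permits rigidification along $H^{-1}(\G)$, producing a proper flat algebraic space $H^0(\G) \to S$ of finite presentation through which $\G \to S$ factors as an $H^{-1}(\G)$-gerbe. Properness, flatness, and finite presentation descend through the gerbe. Smoothness of $H^0(\G) \to S$ is obtained from reducedness of the geometric fibers of $\G$ combined with the group law: a smooth chart at the identity section propagates to all points by translation. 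Geometric connectedness of $H^0(\G)$ is inherited from $\G$. A smooth proper commutative group algebraic space with geometrically connected fibers is then an abelian scheme.

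The main obstacle is the direction $(1) \Rightarrow (2)$, specifically verifying that rigidifying $\G$ by $H^{-1}(\G)$ produces an honest scheme, smooth and proper over $S$, rather than merely an algebraic space. Smoothness is the key point and relies on exploiting the Picard-category structure to spread a smooth atlas around the identity via translations; once smoothness and properness are in hand, schemeness of a smooth proper commutative group algebraic space with connected fibers is classical. With this established, the equivalence drops out of the exact sequence displayed above and the identification of the inertia.
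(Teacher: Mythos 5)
The paper does not prove this proposition; it simply cites Brochard's Proposition 2.14, so there is no in-paper argument to compare against. Your proposal is a genuine proof sketch, and its architecture --- the exact sequence $0 \to BH^{-1}(\G) \to \G \to H^0(\G) \to 0$, the identification $I_\G \cong \G \times_S H^{-1}(\G)$ by translation, and the transfer of properties along the gerbe $\G \to H^0(\G)$ --- is the right one and closely tracks Brochard's own argument. However, two of your steps are presented as routine when they are in fact the technical heart of the proposition.

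First, in $(2)\Rightarrow(1)$ you assert that reducedness of geometric fibres ``follows from the fppf-local description of the gerbe.'' A gerbe banded by a finite flat group $F$ is trivialized over an fppf cover $U \to H^0(\G)_{\bar s}$, but when $F$ is not smooth the trivializing cover need not be smooth over the abelian variety $H^0(\G)_{\bar s}$, and flat pullback does not preserve reducedness (compare $\mu_p \to \spec \bF_p$). What you actually need is an fppf atlas of $\G_{\bar s}$ with reduced source --- for instance by choosing a presentation $[G^{-1}\to G^0]$ of $\G_{\bar s}$ in which $G^0$ is representable and smooth, so that $G^0$ supplies such an atlas. As written, the step does not go through.

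Second, in $(1)\Rightarrow(2)$ you rigidify $\G$ along its inertia to obtain $H^0(\G)$ as a proper flat commutative group algebraic space and then declare schemeness ``classical.'' That a smooth proper commutative group algebraic space with geometrically connected fibres over an arbitrary base is in fact a scheme is a theorem of Raynaud, and it is precisely what makes the equivalence non-trivial; it needs to be invoked explicitly, not folded into the last sentence. You should also verify that the rigidification along $H^{-1}(\G)$ really is an algebraic space (i.e.\ that $H^{-1}(\G)$ is the entire inertia, so no residual gerbe structure remains) and that this algebraic space agrees, as an fppf sheaf, with the sheafification $H^0(\G)$ of Definition \ref{def:H0}. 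With these points filled in, your argument is sound and would constitute a self-contained proof where the paper offers only a citation.
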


\begin{proof}
	The is \cite[Proposition 2.14]{Brochard1}.
\end{proof}

When these equivalent conditions are satisfied we call $\G$ an abelian stack.

\begin{theorem}[The dual of an abelian stack]\label{thm:dualAbStack}
Let $\A\ra S$ be an abelian stack. Let $e_\A\colon S\ra \A$ be the neutral section of $\A$. Define $F$ by the following fiber diagram.
	
Then $\dua(\A)$ is a group scheme over $S$ and fits into an exact sequence
\begin{equation}
	0\rightarrow A^t \rightarrow \dua(\A) \rightarrow F^D\rightarrow 0
\end{equation}
where $F^D=\hom_{
\textnormal{grp}}(F,\bG_m)$ is the Cartier dual of $F$ and $A^t$ is the dual abelian scheme of $A$. Furthermore, $H^{-1}(\G)=F$.
\end{theorem}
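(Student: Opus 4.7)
The plan is to use Deligne's equivalence (Remark \ref{r:deligne}) to transport the problem into the derived category $D^{[-1,0]}(S)$ of fppf abelian sheaves, and compute the dual cohomologically. Under this equivalence $\A$ corresponds to a complex $C_\A$ with $H^{-1}(C_\A)=F$ and $H^0(C_\A)=A$ (the coarse moduli abelian scheme), and there is a canonical distinguished triangle
\[ BF \longrightarrow \A \longrightarrow A \xrightarrow{+1} \]
in $D^{[-1,0]}(S)$ realizing $\A$ as a 2-categorical extension of $A$ by $BF$. Since $B\bG_m$ corresponds to the complex $\bG_m[1]$, the dualization functor $\dua(-)=\hom_{S\text{-}\mathrm{grp}}(-,B\bG_m)$ agrees on $D^{[-1,0]}$ with the good truncation of the contravariant derived functor $R\Hom(-,\bG_m[1])$. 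I would apply this to the triangle to obtain
\[ \dua(A) \longrightarrow \dua(\A) \longrightarrow \dua(BF) \xrightarrow{+1}. \]

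The next step is to compute the two outer terms. For the abelian scheme $A$, Barsotti--Weil gives $\Ext^1_{\mathrm{fppf}}(A,\bG_m)=A^t$, together with $\Hom(A,\bG_m)=0$ (since $A$ is proper and geometrically connected) and the vanishing $\Ext^i_{\mathrm{fppf}}(A,\bG_m)=0$ for $i\geq 2$; these combine to give $\dua(A)=A^t$ concentrated in degree $0$. For the finite flat finitely presented group $F$, Cartier duality gives $\Hom(F,\bG_m)=F^D$ and $\Ext^i_{\mathrm{fppf}}(F,\bG_m)=0$ for $i\geq 1$, so that $\dua(BF)=R\Hom(F[1],\bG_m[1])=F^D$ is also concentrated in degree $0$.

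Substituting into the triangle and reading off cohomology yields the long exact sequence
\[ 0 \longrightarrow H^{-1}(\dua(\A)) \longrightarrow 0 \longrightarrow A^t \longrightarrow H^0(\dua(\A)) \longrightarrow F^D \longrightarrow 0, \]
so $H^{-1}(\dua(\A))=0$ (showing that $\dua(\A)$ is a genuine sheaf, not an honest stack), and the asserted short exact sequence $0\to A^t\to \dua(\A)\to F^D\to 0$ falls out. Representability of $\dua(\A)$ as a group scheme then follows from fppf descent: an extension of a finite flat finitely presented commutative group scheme by an abelian scheme is representable, so the resulting sheaf is duabelian in the sense of Definition \ref{def:duabelianGr}. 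The identification $H^{-1}(\A)=F$ is immediate from the defining fiber diagram of $F$ as the automorphism sheaf of the neutral section.

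The main obstacle is justifying the higher Ext vanishings in the fppf topology, particularly $\Ext^i_{\mathrm{fppf}}(A,\bG_m)=0$ for $i\geq 2$; this is a nontrivial cohomological input, typically extracted from the local-to-global Ext spectral sequence combined with Breen's vanishing results for abelian schemes. A subtler bookkeeping issue is verifying that the stack-level functor $\hom_{S\text{-}\mathrm{grp}}(-,B\bG_m)$ genuinely matches the sheaf-level $R\Hom(-,\bG_m[1])$ under Deligne's equivalence, which requires chasing signs and shifts carefully; once that identification is in place, the Cartier-duality vanishings for $F$ and the representability of extensions are essentially formal.
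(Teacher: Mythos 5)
The paper does not actually supply a proof of this statement; it simply points to the proof of Theorem 3.17 in Brochard's \emph{Duality for commutative group stacks}. Your sketch reconstructs exactly the strategy of that proof: pass through Deligne's dictionary to the derived category of two-term complexes, write the canonical filtration $0\to BF\to\A\to A\to 0$ as a distinguished triangle, apply $\hom_{S\text{-}\mathrm{grp}}(-,B\bG_m)\simeq\tau_{[-1,0]}R\Hom(-,\bG_m[1])$, and feed in Barsotti--Weil for the abelian scheme and Cartier duality for $F$. So up to packaging (Brochard phrases the key step as an explicit description of $\dua$ of a two-term complex rather than as a triangulated $R\Hom$) this is the same route as the cited source, and the conclusion $H^{-1}(\dua(\A))=0$ with $0\to A^t\to\dua(\A)\to F^D$ exact on the left is correct and straightforwardly read off from the long exact sequence.

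Two small calibrations on what is actually needed. First, your invocation of $\Ext^i_{\mathrm{fppf}}(F,\bG_m)=0$ for $i\ge 1$ is superfluous: since $\dua(-)$ only sees cohomology in degrees $-1,0$, the identification $\dua(BF)=F^D$ uses only $\Hom(F,\bG_m)=F^D$ and the trivial vanishing in negative degrees, not the vanishing of $\Ext^{\ge 1}(F,\bG_m)$. Second, and more importantly, the real content of the theorem is exactness on the right, $\dua(\A)\twoheadrightarrow F^D$, which in your long exact sequence is the vanishing of the boundary map $F^D\to H^1\bigl(R\Hom(A,\bG_m[1])\bigr)=\Ext^2_{\mathrm{fppf}}(A,\bG_m)$. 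You assert $\Ext^i_{\mathrm{fppf}}(A,\bG_m)=0$ for $i\ge 2$ outright, but this is stronger than what Breen's vanishing theorem gives over a general base (Breen's result is local and has restrictions depending on the residue characteristics), and it is also more than is needed: one only needs that this particular connecting homomorphism is zero. This is precisely the point Brochard handles with care in his Theorem 3.17, so if you were to write out the argument in full you should either quote his exactness statement for $\dua$ applied to the canonical filtration, or prove the vanishing of the boundary map directly (e.g.\ by exhibiting enough sections of $\dua(\A)\to F^D$ fppf-locally). As it stands this is the one genuine gap in the sketch; the remaining concerns you flag about matching the stack-level and complex-level duals are real but routine and are exactly what Brochard's Lemma 3.5 settles.
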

\begin{proof}
	This is shown in the proof of \cite[Theorem 3.17]{Brochard1}.
\end{proof}

By the definition of an abelian stack, we find that the group scheme $F\ra S$ defined above is finite and flat, because the inertia of $\A\ra S$ is finite and flat by the definition of an abelian stack. By Cartier duality we find that $F^D$ is finite and flat. In other words, if $\A$ is an abelian stack, then $\dua(\A)\ra S$ is an extension of a finite flat group scheme by an abelian scheme. There is a converse to theorem \ref{thm:dualAbStack}.

\begin{definition}[Duabelian groups: See definition 2.17 in \cite{Brochard1}]\label{def:duabelianGr}
Let $G\ra S$ be a commutative group scheme over $S$. We say that $G$ is a \emph{duabelian} group if there is a finite flat and finitely presented commutative group scheme $F\ra S$ and an abelian scheme $A\ra S$ such that $G$ is an extension of $F$ by $A$. That is there is an exact sequence of commutative group schemes
\begin{equation*}
0\ra A\ra G\ra F\ra 0.
\end{equation*}
\end{definition}
Brochard proved that the duabelian groups and abelian stacks are \emph{dual} to one another in the following sense.
\begin{theorem}[Abelian stacks and duabelian groups are dual: See theorem 3.17 in \cite{Brochard1}]\label{thm:AbduaDuality}
Let $\A\ra S$ be an abelian stack and $G\ra S$ be a duabelian group. Then $\dua(\A)$ is a duabelian group and $\dua(G)$ is an abelian stack. 
\end{theorem}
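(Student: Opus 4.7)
The first assertion, that $\dua(\A)$ is duabelian, is essentially the content of Theorem \ref{thm:dualAbStack} above (which identifies $\dua(\A)$ as an extension of the Cartier dual $F^D$ by the dual abelian scheme $A^t$). I would therefore focus the argument on the converse: given a duabelian group $G \to S$ sitting in a short exact sequence
\[ 0 \to A \to G \to F \to 0 \]
with $A$ an abelian scheme and $F$ finite, flat, and finitely presented, I would show that $\dua(G)$ is an abelian stack. My plan is to translate everything into Deligne's derived-category description of commutative group stacks (Remark \ref{r:deligne}), where $\dua$ becomes the functor $R\underline{\Hom}(-, \mathbb{G}_m)[1]$ followed by truncation into $D^{[-1,0]}(S_{\textnormal{fppf}})$. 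By Definition \ref{def:AbelianStack}, it then suffices to prove that the resulting two-term complex has $H^0$ an abelian scheme and $H^{-1}$ finite, flat, and finitely presented.

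The key computation is the long exact sequence of fppf $\underline{\Ext}^\bullet(-, \mathbb{G}_m)$ sheaves obtained by applying $R\underline{\Hom}(-, \mathbb{G}_m)$ to $0 \to A \to G \to F \to 0$. I would invoke the following standard inputs:
\begin{enumerate}
\item $\underline{\Hom}(A, \mathbb{G}_m) = 0$, since $A \to S$ is proper with geometrically connected and reduced fibres.
\item $\underline{\Ext}^1(A, \mathbb{G}_m) = A^t$, the dual abelian scheme.
\item $\underline{\Hom}(F, \mathbb{G}_m) = F^D$, the Cartier dual, which is again finite, flat and of finite presentation.
\item $\underline{\Ext}^i(F, \mathbb{G}_m) = 0$ for $i \geq 1$ in the fppf topology, valid for any finite flat commutative group scheme of finite presentation.
\end{enumerate}
Plugging these into the long exact sequence collapses it to
\[ 0 \to F^D \to \underline{\Hom}(G, \mathbb{G}_m) \to 0 \qand 0 \to \underline{\Ext}^1(G, \mathbb{G}_m) \to A^t \to 0, \]
so that $R\underline{\Hom}(G, \mathbb{G}_m)[1]$ has cohomology $F^D$ in degree $-1$ and $A^t$ in degree $0$. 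Translating back through Deligne's correspondence shows $\dua(G)$ is the abelian stack with $H^{-1}(\dua(G)) = F^D$ and $H^0(\dua(G)) = A^t$, as desired.

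The step I expect to require the most care is the vanishing $\underline{\Ext}^i(F, \mathbb{G}_m) = 0$ for $i \geq 1$ and the identification $\underline{\Ext}^1(A, \mathbb{G}_m) = A^t$ in the relative fppf setting over a general base $S$; both are classical but need to be quoted carefully to avoid étale-versus-fppf topology issues. A convenient strategy is to work étale-locally on $S$ (where an abelian scheme admits well-behaved resolutions and finite flat group schemes are handled by Cartier duality) and then descend, using that all the sheaves involved are fppf sheaves and that formation of $\underline{\Ext}$ commutes with flat base change for finite flat groups. Once these vanishing and representability statements are in hand, the argument is a purely formal manipulation of the distinguished triangle associated to $0 \to A \to G \to F \to 0$, dual to the argument underlying Theorem \ref{thm:dualAbStack}.
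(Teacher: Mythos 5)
The paper does not prove this statement itself; it simply cites \cite[Theorem~3.17]{Brochard1}, so there is no in-paper argument to compare against. That said, your outline --- translate to Deligne's two-term-complex picture, apply $R\underline{\Hom}(-,\bG_m)[1]$ to the defining extension, and read off $H^{-1}$ and $H^0$ from the resulting long exact sequence of fppf $\underline{\Ext}$ sheaves --- is indeed the shape of Brochard's argument, and invoking Definition~\ref{def:AbelianStack}(2) (Brochard's Proposition~2.14) to conclude algebraicity of $\dua(G)$ from the identification of $H^{-1}$ and $H^0$ is the right move.

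The one place I would push back is your input (4). The assertion that $\underline{\Ext}^i_{\mathrm{fppf}}(F,\bG_m)=0$ for \emph{all} $i\geq 1$ is not a safe blanket statement for arbitrary finite flat $F$ over a general base, and you do not need it. The classical fact (Raynaud/SGA) gives $\underline{\Ext}^1_{\mathrm{fppf}}(F,\bG_m)=0$, which yields $\underline{\Hom}(G,\bG_m)\cong F^D$ and an exact sequence
\[
0 \to \underline{\Ext}^1(G,\bG_m) \to A^t \xrightarrow{\;\partial\;} \underline{\Ext}^2(F,\bG_m).
\]
To conclude $\underline{\Ext}^1(G,\bG_m)\cong A^t$ you need $\partial=0$, and the argument for that is not that the target vanishes but a divisibility observation: $F$ is annihilated by some integer $n>0$, hence so is $\underline{\Ext}^2(F,\bG_m)$, so $\partial$ factors through $A^t/nA^t$, which is zero as an fppf sheaf because $[n]\colon A^t\to A^t$ is an fppf epimorphism. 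With that replacement your argument closes; I would also tighten inputs (1) and (2) by citing the relative Barsotti--Weil theorem explicitly, since you are over an arbitrary base $S$ rather than a field.
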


\end{subsection}

\end{section} 

\begin{section}{Infinity categories}

\begin{subsection}{The stable $\infty$-category of quasi-coherent sheaves}\label{subsec:QC}

We refer the reader to \cite[Chapter 1]{HAG} for an introduction to stable infinity categories. In particular, given an ordinary 
ring $R$ one can associate to it a stable infinity category denoted $\QC(\spec R)$ whose homotopy category is the ordinary
unbounded derived category of $R$-modules, see \cite[1.2]{HAG}. For an algebraic stack $X$ we define, following \cite{bfn},
\[
	\QC(X) = \lim_{\spec R\in {\rm Aff}/X} \QC(\spec R).
	\]
The limit is taken in the $\infty$-category of $\infty$-categories. This is a stable $\infty$ category with homotopy category the usual
unbounded derived category of complexes of sheaves with quasi-coherent cohomology. 
We will refer to the vertices of $\QC(X)$ as complexes of ${\mathcal O}_X$-modules. 
\end{subsection}

\begin{subsection}{Perfect stacks}
We will use the notion of a perfect stack introduced in \cite[Section 3]{bfn}. Recall that a complex of $R$-modules $M\in \QC(\spec R)$ is called
perfect if it is perfect, a perfect complex in the usual sense. For a stack $X$, a complex $M\in\QC(X)$ is said to be perfect if for every morphism
$f:\spec R\rightarrow X$ the restriction $f^*M$ is perfect. 

We denote by ${\rm Perf}(X)$ the full $\infty$-subcategory of $\QC(X)$ generated by perfect complexes. We say $X$ is perfect, see \cite[3.2]{bfn}, 
if $X$ has affine diagonal and 
\[
	\QC(X)\cong {\rm Ind}{\rm Perf}(X).\]
	We refer the reader to \cite[Sec. 3]{bfn} for the definitions of perfect objects, compact, and dualizable objects.

We have the following characterization of perfect stacks.

	\begin{proposition}\label{p:bz}
		Suppose $X$ is a stack with an affine diagonal. Then the following are equivalent
		\begin{enumerate}
			\item $X$ is perfect 
			\item $\QC(X)$ is compactly generated and its compact and dualizable objects coincide.
		\end{enumerate}
	\end{proposition}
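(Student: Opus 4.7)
The plan is to reduce the proposition to the identification $\mathrm{Perf}(X) = \mathrm{Dualizable}(X)$ inside $\QC(X)$ for any stack $X$ with affine diagonal, together with the standard $\infty$-categorical fact that a stable presentable $\infty$-category is compactly generated precisely when it is equivalent to $\mathrm{Ind}$ of its (idempotent-complete) subcategory of compact objects. Once the equality $\mathrm{Perf} = \mathrm{Dualizable}$ is established, both implications of the proposition follow formally.

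First, I would establish that an object $M \in \QC(X)$ is perfect if and only if it is dualizable. The inclusion $\mathrm{Perf}(X) \subseteq \mathrm{Dualizable}(X)$ is the more analytic direction: perfect complexes admit $\mathcal{O}_X$-linear duals, and for a stack with affine diagonal these local duals assemble into a genuine dual in $\QC(X)$, the affine diagonal hypothesis ensuring well-behaved pullback and pushforward along the diagonal. For the reverse inclusion $\mathrm{Dualizable}(X) \subseteq \mathrm{Perf}(X)$, I would use that dualizability is preserved under pullback along any $f : \spec R \to X$ since $f^*$ is symmetric monoidal; on an affine scheme dualizable coincides with perfect in the classical sense, so $f^*M$ is perfect for all such $f$, which is precisely the definition of $M$ being perfect.

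Given $\mathrm{Perf} = \mathrm{Dualizable}$, the two directions of the proposition unwind cleanly. For (1) $\Rightarrow$ (2), if $\QC(X) \simeq \mathrm{Ind}\,\mathrm{Perf}(X)$ then $\QC(X)$ is automatically compactly generated, and the compact objects are retracts of objects of $\mathrm{Perf}(X)$, hence $\mathrm{Perf}(X)$ itself by idempotent completeness; combined with $\mathrm{Perf} = \mathrm{Dualizable}$ this gives compact $=$ dualizable. For (2) $\Rightarrow$ (1), compact generation provides $\QC(X) \simeq \mathrm{Ind}(\QC(X)^c)$, and the hypothesis that compact $=$ dualizable, combined with $\mathrm{Dualizable} = \mathrm{Perf}$, identifies this with $\mathrm{Ind}\,\mathrm{Perf}(X)$.

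The main obstacle is the identification $\mathrm{Dualizable}(X) = \mathrm{Perf}(X)$, and more precisely the direction $\mathrm{Perf} \Rightarrow \mathrm{Dualizable}$. This is where the affine diagonal hypothesis is indispensable: one must glue the locally constructed duals into a global dual in the $\infty$-categorical setting, not merely at the level of the homotopy category, so the argument amounts to verifying descent for the dualizability datum along a smooth affine cover. The opposite direction and the Ind-completion formalism are essentially formal by comparison.
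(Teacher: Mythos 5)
The paper does not prove this proposition but simply cites it as \cite[Prop.\ 3.9]{bfn}; your write-up reconstructs the Ben-Zvi--Francis--Nadler argument, and the overall structure---reduce to the identification $\mathrm{Perf}(X)=\mathrm{Dualizable}(\QC(X))$, then deduce both implications formally from the fact that a compactly generated stable $\infty$-category is $\mathrm{Ind}$ of its compact objects---is exactly the one used there and is correct. The one place your sketch is imprecise is the role of the affine-diagonal hypothesis in $\mathrm{Perf}\subseteq\mathrm{Dualizable}$: you attribute it to ``well-behaved pullback and pushforward along the diagonal,'' but pushforward along $\Delta$ is not the mechanism. In \cite{bfn} the hypothesis (together with quasi-compactness, which is a standing assumption in that section) guarantees that the \v{C}ech nerve of an affine flat atlas $U\to X$ has affine terms, so that $\QC(X)$ is exhibited as the totalization of a cosimplicial diagram of module categories over affines; one then builds the candidate dual from the termwise duals (compatible because duals are preserved by any symmetric monoidal functor such as restriction along a face map) and checks the triangle identities using conservativity of $\QC(X)\to\QC(U)$. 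The reverse inclusion $\mathrm{Dualizable}\subseteq\mathrm{Perf}$ via symmetric monoidality of $f^*$, and the $\mathrm{Ind}$-completion bookkeeping for both implications, are as you describe and match \cite{bfn}.
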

	\begin{proof}
		This is \cite[Prop. 3.9]{bfn}.
	\end{proof}

A morphism of algebraic stacks $X\rightarrow Y$ is perfect if for all $\spec R\rightarrow Y$ the stack $X\times_Y \spec R$ is perfect. 

For our application, we will need to know that $\A\ra S$ is perfect.

\begin{lemma}
	Let $X$ be a quasi-separated algebraic stack. Then every compact object of $\QC(X)$ is perfect.
\end{lemma}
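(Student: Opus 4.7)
The plan is to reduce the statement to the well-known classical fact that in $\QC(\spec R) = D(R)$, the compact objects coincide with the perfect complexes of $R$-modules (due to Neeman and Thomason--Trobaugh). Accepting this, for a compact object $M\in\QC(X)$ and any morphism $f\colon\spec R\to X$, one need only show that $f^*M$ is compact in $\QC(\spec R)$; in other words, it suffices to prove that the pullback functor $f^*$ preserves compact objects.

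By the general theory of adjoint functors between presentable stable $\infty$-categories, $f^*$ preserves compact objects if and only if its right adjoint $f_*$ preserves filtered colimits. So the task reduces to showing that $f_*\colon\QC(\spec R)\to\QC(X)$ commutes with filtered colimits. Note that since $\spec R$ is quasi-compact and $X$ is quasi-separated, the morphism $f$ is automatically quasi-compact and quasi-separated: indeed, for any quasi-compact $Z\to X$, the fiber product $\spec R\times_X Z$ is the base change of the quasi-compact diagonal of $X$ along $\spec R\times Z\to X\times X$, and so is itself quasi-compact.

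To verify that $f_*$ preserves filtered colimits, I would use a smooth atlas $p\colon U\to X$ by a disjoint union of affine schemes. Smooth descent for $\QC$, as used in the definition $\QC(X)=\lim_{\spec A\to X}\QC(\spec A)$ recalled in \ref{subsec:QC}, implies that $p^*$ is conservative and preserves all small colimits, so preservation of filtered colimits by $f_*$ can be checked after applying $p^*$. Flat base change yields a natural equivalence $p^*f_*\simeq g_*q^*$, where $g\colon U\times_X\spec R\to U$ is the base change of $f$ along $p$ and $q$ is the other projection. Since $q^*$ is a left adjoint it preserves all colimits, so the question becomes whether $g_*$ preserves filtered colimits. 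But $g$ is a quasi-compact, quasi-separated morphism of algebraic spaces, and the classical result for such morphisms (Neeman, Lipman) gives exactly this.

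The main obstacle is to carry out the smooth-descent reduction carefully at the level of the stable $\infty$-category rather than at the level of the triangulated homotopy category, and in particular to invoke $\infty$-categorical flat base change in the right form. Once these standard tools — which are already implicit in the construction of $\QC(X)$ — are in hand, the argument is essentially formal; the substantive input is the classical identification of compact objects of $D(R)$ with perfect complexes.
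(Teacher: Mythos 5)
Your proof is correct and reconstructs, in somewhat expanded form, the argument of \cite[Lemma~4.4]{hr}, which is all the paper cites for this statement: both arguments reduce to the Thomason--Neeman identification of compacts with perfects in $\QC(\spec R)$ by showing that $f^*$ preserves compact objects, which in turn follows once $f_*$ is known to commute with filtered colimits. Hall and Rydh obtain this last point from their earlier results on concentrated (representable, quasi-compact, quasi-separated) morphisms and only consider smooth $f\colon\spec R\to X$, using that perfection is smooth-local; your flat-base-change reduction along a smooth atlas to a quasi-compact quasi-separated morphism of algebraic spaces is a valid alternative way to supply the same input, at the mild cost of handling arbitrary rather than merely smooth $f$.
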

\begin{proof}
	This is \cite[Lemma 4.4]{hr}.
\end{proof}

\begin{proposition}\label{p:concentrated}
	Let $X$ be a quasi-compact, quasi-separated algebraic stack. The 
	following are equivalent:
	\begin{enumerate}
		\item Every perfect complex is compact.
		\item $X$ has finite cohomological dimension, that is, there is an integer 
		$N$ so that $H^d(X,F)$ vanishes for $d>N$ and every quasi-coherent sheave $F$.
	\end{enumerate}
\end{proposition}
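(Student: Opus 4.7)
The plan is to reformulate compactness of a perfect complex as a property of the derived global sections functor $R\Gamma(X,-)$. By definition, $P \in \QC(X)$ is compact iff $\Hom_{\QC(X)}(P,-)$ preserves filtered colimits, which in the stable setting is equivalent to preserving arbitrary direct sums. When $P$ is perfect, it is dualizable in $\QC(X)$ with dual $P^\vee$, giving the natural equivalence
\[
\Hom_{\QC(X)}(P,M) \;\simeq\; R\Gamma\bigl(X,\, P^\vee \otimes M\bigr).
\]
Since tensoring with the dualizable object $P^\vee$ has a right adjoint and hence preserves all colimits, the question of compactness of an arbitrary perfect complex reduces to whether $R\Gamma(X,-)$ commutes with direct sums on $\QC(X)$. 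Both directions of the proposition will be derived from this observation.

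For $(2) \Rightarrow (1)$, finite cohomological dimension on $\mathrm{QCoh}(X)$ implies $R\Gamma(X,-)$ has bounded cohomological amplitude; a standard truncation argument then upgrades this to commuting with arbitrary direct sums on the unbounded $\infty$-category $\QC(X)$, and compactness of any perfect $P$ follows from the displayed identity. For $(1) \Rightarrow (2)$, the structure sheaf $\O_X$ is trivially perfect, hence compact by hypothesis, so $R\Gamma(X,-) = \Hom_{\QC(X)}(\O_X,-)$ preserves direct sums; for a quasi-compact quasi-separated algebraic stack this is known to force finite cohomological dimension on the heart, by the structural results on concentrated stacks in \cite{hr}.

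The main obstacle is the passage between boundedness of cohomology measured on the abelian category $\mathrm{QCoh}(X)$ and the statement that $R\Gamma$ commutes with direct sums on the unbounded derived $\infty$-category $\QC(X)$. For schemes this equivalence is essentially automatic from Grothendieck vanishing together with the quasi-compact and quasi-separated hypothesis, but for algebraic stacks it is genuinely more delicate since one can have unbounded cohomological dimension (e.g.\ classifying stacks of smooth affine groups in positive characteristic); the uniform bound needed to handle unbounded complexes is exactly what \cite{hr} supplies. Once this technical input is invoked, each direction of the equivalence reduces to a short manipulation of the dualizability identity above.
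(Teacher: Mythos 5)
Your proposal is correct and follows essentially the same route as the paper, which simply cites \cite[Remark 4.6]{hr} for the whole equivalence. You unpack the standard dualizability reduction (compactness of a perfect $P$ $\Leftrightarrow$ $R\Gamma(X,-)$ commutes with coproducts, via $\Hom(P,-)\simeq R\Gamma(X,P^\vee\otimes-)$) before deferring to Hall--Rydh for the cohomological-dimension characterization, so the substance is the same reference in both cases.
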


\begin{proof}
	See \cite[Remark 4.6]{hr}.
\end{proof}

\begin{theorem}\label{thm:percompactgeneration}
	Let $X$ be a quasi-compact algebraic stack with quasi-finite, quasi-separated diagonal. Then $\QC(X)$ is generated by perfect complexes. 
\end{theorem}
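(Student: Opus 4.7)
This statement is one of the main theorems of Hall--Rydh, and the proof I would give follows \cite{hr}. The strategy rests on three ingredients: Noetherian approximation, a local structure theorem for stacks with quasi-finite quasi-separated diagonal, and Noetherian dévissage to produce perfect complexes supported on arbitrary closed substacks. The upshot is to construct, for every closed substack $Z \subsetneq X$ with complementary open $U$, a perfect object in $\QC(X)$ whose cohomological support lies in $U$ and which generates $\QC(U)$; combined with generators of $\QC(Z)$ obtained by induction and extended by zero, this yields compact generation.

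First, I would reduce to the case that $X$ is of finite presentation over $\spec \bZ$ (in particular Noetherian). By standard approximation results for stacks with quasi-finite quasi-separated diagonal, $X$ can be written as an inverse limit $\lim_\lambda X_\lambda$ along affine transition morphisms, with each $X_\lambda$ of finite presentation over $\bZ$ and inheriting the diagonal hypothesis. Compact generation by perfect complexes behaves well under such approximations, because $\QC$ carries this kind of inverse limit of stacks to a colimit of presentable stable $\infty$-categories and perfection is stable under the resulting base change.

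Next, the geometric engine is a local structure theorem (due to Rydh) for stacks with quasi-finite quasi-separated diagonal: around every point $x$ of $X$ there is a representable étale neighborhood $(X',x') \to (X,x)$ together with a finite flat surjection $p : V \to X'$ from an affine scheme $V$. I would then run Noetherian induction on closed substacks. For the inductive step, pull back a compact generator from the affine $V$, push it forward along $p$, and then push forward along the étale neighborhood $X' \to X$; after multiplication by an appropriate idempotent/extension by zero construction, this gives a perfect object on $X$ with support in the chosen open. Combined with generators on proper closed substacks (obtained by induction and preserved under closed immersions, which are trivially perfect morphisms), these generate $\QC(X)$, as needed.

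\textbf{Main obstacle.} The central technical point is to guarantee that the two pushforwards above preserve perfection (equivalently, preserve compactness, in view of the lemma cited before the theorem and Proposition~\ref{p:concentrated}). This is where the quasi-finite, quasi-separated diagonal is used essentially: one must show that the morphisms appearing in the local structure theorem are \emph{concentrated} in the Hall--Rydh sense, with uniformly bounded tor-amplitude and finite cohomological dimension on fibers. Once this preservation is verified, the dévissage is formal, and the Noetherian approximation step is routine.
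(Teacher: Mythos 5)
The paper gives no argument of its own for this theorem; the proof is simply the citation \cite[Theorem A]{hr}. Your sketch follows the same reference and is a reasonable outline of how Hall--Rydh actually prove it --- Noetherian approximation along affine transition morphisms, Rydh's local structure theorem for quasi-compact stacks with quasi-finite separated diagonal, and Noetherian d\'evissage producing perfect complexes with prescribed support, with the compactness/perfection of the relevant pushforwards as the key technical point --- so there is nothing in the paper itself to compare against. One small caveat unrelated to your argument: Hall--Rydh's Theorem A assumes the diagonal is quasi-finite and \emph{separated}, a slightly stronger condition than the ``quasi-separated'' written in the paper's statement; your sketch, which tracks the Hall--Rydh hypotheses, proves the correct version.
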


\begin{proof}
	This is \cite[Theorem A]{hr}.
\end{proof}

\begin{theorem}\label{thm:perfbro}
	Let $\G$ be a proper, flat, and finitely presented commutative group stack over a quasi-compact and separated scheme $S$. Suppose that $\G$ has finite, flat, and finitely presented inertia over $S$. Additionally, assume that the moduli map $f:\G\rightarrow H^{0}(\G)$ is a good moduli space morphism and that $H^0(\G)\ra S$ has finite cohomological dimension. Then $\G$ is a perfect stack and $\G\ra S$ is a perfect morphism.
\end{theorem}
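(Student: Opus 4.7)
The plan is to verify the three criteria of Proposition~\ref{p:bz} for $\G$ — affine diagonal, compact generation of $\QC(\G)$, and agreement of compact with dualizable objects — and then bootstrap to perfection of the morphism $\G\ra S$ by base change.

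For the \emph{affine diagonal}, I would factor $\Delta_{\G/S}$ as
\[\G \longrightarrow \G\times_{H^0(\G)}\G \longrightarrow \G\times_S \G.\]
Because $\G\ra S$ is proper and $f$ is a good moduli morphism, $H^0(\G)\ra S$ is proper, hence separated, so the second arrow is a closed immersion. The first arrow is the relative diagonal of $f$; invoking the local structure theorem for good moduli morphisms with finite inertia, $\G$ is étale-locally on $H^0(\G)$ a quotient stack $[\spec R/G]$ by a linearly reductive affine group $G$, and such a quotient has affine diagonal over its good moduli space. A composition of affine morphisms is affine.

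For \emph{compact generation}, the finite inertia makes $\Delta_{\G/S}$ quasi-finite, and it is separated by the previous paragraph; $\G$ itself is quasi-compact since it is proper over the quasi-compact $S$. Theorem~\ref{thm:percompactgeneration} then gives that $\QC(\G)$ is compactly generated by perfect complexes. For \emph{compact $=$ perfect $=$ dualizable}, the lemma preceding Proposition~\ref{p:concentrated} gives compact $\subseteq$ perfect, and the reverse inclusion follows from Proposition~\ref{p:concentrated} once I bound the cohomological dimension of $\G$. Since $f_\ast$ is exact on quasi-coherent sheaves (the defining feature of a good moduli morphism), one has $H^d(\G,\F)\cong H^d(H^0(\G), f_\ast\F)$ for every quasi-coherent $\F$, so the hypothesis on $H^0(\G)\ra S$, together with $S$ being quasi-compact and separated, transfers a finite bound to $\G$. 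Perfect objects are always dualizable in $\QC$ of a stack with affine diagonal (general nonsense in symmetric monoidal stable $\infty$-categories; see \cite[Sec.~3]{bfn}), completing the chain.

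Finally, for perfection of the morphism $\G\ra S$, every hypothesis — properness, flatness, finite presentation, finite flat inertia, and existence of a good moduli space of finite cohomological dimension over the base — is stable under affine base change $\spec R\ra S$: good moduli morphisms are stable under flat base change, $H^0$ commutes with flat base change, and the remaining conditions are manifestly preserved. Hence $\G\times_S \spec R$ satisfies the theorem's hypotheses over $\spec R$ and is a perfect stack by the first two paragraphs, which is exactly perfection of $\G\ra S$. The main obstacle, I expect, is the affine diagonal: only finite (not necessarily affine) inertia is assumed, so to rule out non-affine gerby behavior one needs genuine input from Alper's local structure theory of good moduli morphisms, whereas the remaining steps are largely assembly of the Hall--Rydh results quoted above.
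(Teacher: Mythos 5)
Your proposal is correct and tracks the paper's argument in its middle steps: Hall--Rydh compact generation (Theorem~\ref{thm:percompactgeneration}), the cohomologically-affine transfer of finite cohomological dimension from $H^0(\G)$ to $\G$, Proposition~\ref{p:concentrated} to get perfect $=$ compact, and then Proposition~\ref{p:bz}. It differs at both ends. For the affine diagonal you invoke the Alper--Hall--Rydh local structure theorem for good moduli morphisms; this works, but is heavier than needed. The hypotheses already give a one-line argument: $\G\ra S$ is proper, hence separated, so $\Delta_{\G/S}$ is proper; it is quasi-finite because the inertia is finite; and a proper quasi-finite morphism representable by algebraic spaces is finite, hence affine. (For what it is worth, the paper's own proof never mentions the affine-diagonal hypothesis of Proposition~\ref{p:bz}, so your attention to it is a genuine improvement even if the mechanism is overbuilt.) For perfection of the morphism $\G\ra S$, you re-verify the hypotheses after arbitrary affine base change $\spec R\ra S$ and re-run the argument; the paper instead observes that $S$ is perfect (being a quasi-compact, quasi-separated scheme) and cites \cite[Corollary 3.23]{bfn}, which says a morphism between perfect stacks is perfect. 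Your route is valid and more self-contained, but carries the extra burden of checking that good moduli morphisms and $H^0(-)$ are stable under the base change $\spec R\ra S$ (which does hold, by Alper's base-change results and flatness of $\G/S$); the paper's citation sidesteps those checks entirely.
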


\begin{proof}
	$\G\ra \spec S$ is proper so it is separated and so quasi-separated. By Theorem \ref{thm:percompactgeneration} we see that $\QC(\G)$ is generated by a perfect complex. Since the moduli map is good, it is cohomologically affine, which means that all higher direct images of quasi-coherent sheaves vanish. The Leray spectral sequence then gives $H^p(\G,F)=H^p(H^0(\G),f_* F)$ for any quasi-coherent sheaf $F$ on $\G$. As we have assumed that $H^0(\G)\ra S$ has finite cohomological dimension, and because $f_*F$ is quasi-coherent we obtain that $\G\ra S$ has finite cohomological dimension. So by proposition \ref{p:concentrated} all perfect complexes are compact. Since every compact object is perfect, and perfect objects already coincide with dualizable objects we may conclude that $\G$ is perfect by applying proposition \ref{p:bz}. Finally, since $S$ is quasi-compact and quasi-separated we have $S$ perfect. So $\G\ra S$ is a morphism between perfect stacks. Therefore, by \cite[Corollary 3.23.]{bfn} we find that $\G\ra S$ is perfect.
\end{proof}

\begin{corollary}[Tame abelian stacks are perfect]\label{prop:abStPerfect}
Let $\A\ra S$ be a tame abelian stack over a Noetherian base scheme. Then $\A$ and $S$ are perfect and $\A\ra S$ is a perfect morphism. 
\end{corollary}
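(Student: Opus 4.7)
The plan is to deduce this directly from Theorem \ref{thm:perfbro} by taking $\G = \A$ and verifying each hypothesis for a tame abelian stack. Most of these hypotheses are immediate from Definition \ref{def:AbelianStack}: an abelian stack $\A \to S$ is by definition algebraic, proper, flat, and of finite presentation, with finite, flat, and finitely presented inertia. Moreover $H^0(\A)$ is an abelian scheme over $S$, so it is proper and smooth of bounded relative dimension, and hence $H^0(\A) \to S$ has finite cohomological dimension. The base $S$, being Noetherian (and understood to be separated, as in the reference theorem), has affine diagonal and finite cohomological dimension by Grothendieck's vanishing theorem, so $S$ itself is perfect.

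The one hypothesis not built into the definition of an abelian stack is that the coarse moduli map $f \colon \A \to H^0(\A)$ be a good moduli space morphism, and this is exactly where tameness is used. Following Abramovich--Olsson--Vistoli, a tame stack is one whose geometric stabilizers are linearly reductive, and for such a stack the coarse moduli morphism is cohomologically affine. Combined with the universal property of the coarse space, this recovers Alper's definition of a good moduli space morphism, giving precisely the remaining input required by Theorem \ref{thm:perfbro}.

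With all of these hypotheses in place, Theorem \ref{thm:perfbro} applies directly and yields both that $\A$ is a perfect stack and that $\A \to S$ is a perfect morphism, which is the content of the corollary. I do not expect any real obstacle; the only nontrivial step is the identification of the tameness hypothesis with the good moduli space condition on $\A \to H^0(\A)$, and this is essentially a matter of citing the appropriate characterization of tame stacks. The rest is a direct unpacking of Definition \ref{def:AbelianStack} into the language of Theorem \ref{thm:perfbro}.
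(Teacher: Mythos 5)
Your proof is correct and follows essentially the same route as the paper: unpack Definition \ref{def:AbelianStack} to get the properness/flatness/inertia hypotheses, use tameness (via the Abramovich–Olsson–Vistoli characterization) to identify $\A \to H^0(\A)$ as a good moduli space morphism, note that $H^0(\A)$ is an abelian scheme of finite cohomological dimension over the Noetherian base, and then invoke Theorem \ref{thm:perfbro}. Your version is somewhat more explicit than the paper's (in particular you spell out why $S$ itself is perfect and why the AOV tameness criterion yields Alper's good-moduli condition), but the decomposition and the key inputs are identical.
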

\begin{proof}
Since $\A\ra S$ is an abelian stack, we see that $\A\ra S$ is flat, proper, and finitely presented, with finite and flat inertia by \cite[Proposition 2.14]{Brochard1}. Since we assumed that $\A\ra S$ is also tame, we see that $\A\ra H^0(\A)$ is a good moduli space. On the other hand, since $\A\ra S$ is an abelian stack, $A=H^0(\G)$ is an abelian scheme over $S$. As $S$ is Noetherian we find that the higher cohomology of quasi-coherent sheaves vanish on $A$. Therefore $A\ra S$ has finite cohomological dimension and therefore $\A\ra S$ satisfies the assumptions of Theorem \ref{thm:perfbro} and we conclude that $\A\ra S$ is a perfect morphism between perfect stacks.  
\end{proof}

\end{subsection}

\end{section}

\begin{section}{Examples of tame perfect stacks}\label{sec:examples}
In this section we construct non-trivial examples of tame abelian stacks. 
\begin{proposition}
Let $\A\ra S$ be an abelian stack. Then $\pi \colon \A\ra H^0(\A)$ is a good moduli space if and only if the geometric fibres of $H^{-1}(\A)\ra S$ are diagonlizable.     
\end{proposition}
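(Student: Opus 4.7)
The plan is to reduce the statement to the Abramovich--Olsson--Vistoli characterization of tame stacks (coarse moduli map is a good moduli space morphism if and only if geometric stabilizers are linearly reductive) and then exploit the fact that linear reductivity collapses to diagonalizability for finite commutative group schemes over an algebraically closed field.

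First I would identify the stabilizer group scheme of $\A$ at every geometric point with a fibre of $H^{-1}(\A)$. Because $\A$ is a commutative group stack, left translation by an arbitrary object $x\in\A(T)$ gives a canonical isomorphism between $\Aut(x)$ and $\Aut(e_\A|_T)$, and the latter is $H^{-1}(\A)(T)$ by Definition \ref{def:H0} and Remark \ref{rem:idrem}. Globally this promotes to an isomorphism $I_\A\cong \A\times_S H^{-1}(\A)$ of group schemes over $\A$. In particular, for every geometric point of $\A$ lying over $\bar s\to S$, the stabilizer is precisely $H^{-1}(\A)_{\bar s}$. I would also observe that $\pi\colon\A\to H^0(\A)$ is the coarse moduli map: $H^0(\A)$ is by construction the fppf sheafification of isomorphism classes and, being an abelian scheme, already satisfies the universal property characterizing the coarse space.

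With these identifications, the proposition follows from the tameness theorem of Abramovich--Olsson--Vistoli, because $\A$ has finite inertia by the definition of an abelian stack. For the forward direction, if $\pi$ is a good moduli space then $\A$ is tame, so every geometric stabilizer $H^{-1}(\A)_{\bar s}$ is linearly reductive; being a finite \emph{commutative} group scheme over an algebraically closed field $\bar k$, it is therefore diagonalizable. For the converse, diagonalizable geometric fibres of $H^{-1}(\A)$ are a fortiori linearly reductive, so $\A$ is tame, and the coarse moduli map is good.

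The one genuinely nontrivial input is the equivalence, for a finite commutative group scheme $F$ over an algebraically closed field $\bar k$, between linear reductivity and diagonalizability. This needs two observations: (i) any linearly reductive such $F$ decomposes as a product of a diagonalizable $\mu$-type group and an étale group of order prime to $\operatorname{char}(\bar k)$, and (ii) over $\bar k$ the étale factor is constant, and each constant cyclic group $\bZ/n\bZ$ with $n$ prime to $\operatorname{char}(\bar k)$ is isomorphic to $\mu_n$ because $\bar k$ contains all $n$th roots of unity. So the étale factor is itself diagonalizable, and $F$ is a product of $\mu_{n_i}$'s. This is the step I expect to spend the most care on; once it is in place, the rest of the argument is a direct application of AOV together with the identification of stabilizers in the first paragraph.
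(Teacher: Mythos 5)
Your proposal is correct and follows essentially the same route as the paper: identify the geometric stabilizers of $\A$ with the fibres of $H^{-1}(\A)$ via translation, invoke the Abramovich--Olsson--Vistoli characterization of tameness in terms of linearly reductive geometric stabilizers, and then use that a finite commutative linearly reductive group scheme over an algebraically closed field is diagonalizable (the paper cites \cite[Lemma 2.5]{Linred} for this last step where you sketch it directly).
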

\begin{proof}
If $\pi\colon\A\ra H^0(\G)$ is a good moduli space then $\A$ is a tame stack by \cite[Remark 4.3]{GoodModuli1}. By \cite[Corollary 3.5]{TameStacks} this is equivalent to the geometric fibers of $\A\ra S$ are tame stacks. By \cite[Theorem 3.2]{TameStacks} this is equivalent to the automorphism groups of points of $\A\times_S \spec k$ are linearly reductive for any geometric point $\spec k\ra S$. Because $\A\times_S \spec k$ is a group stack over a field all the automorphism groups are isomorphic to $H^{-1}(\A\times_S \spec k)=H^{-1}(\A)\times_S \spec k$. By \cite[Lemma 2.5]{Linred} this is equivalent to $H^{-1}(\A)\times_S \spec k$ being diagonalizable as claimed. 
\end{proof}

By \ref{thm:AbduaDuality} and \ref{thm:dualAbStack} constructing abelian stacks is equivalent to constructinga duabelian group
\[0\ra A\ra G\ra F\ra 0\]
where $F$ is is a finite flat commutative group scheme which has linearly reductive geometric fibers and $A$ is an abelian scheme. An abelian stack is obtained by considering $\dua(G)$. Therefore, we seek to construct non-trivial extensions of this form. 
\begin{proposition}
Let $S=\spec k$ where $k$ is an algebraically closed field. Then every tame abelian stack on $S$ is trivial.
\end{proposition}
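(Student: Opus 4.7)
The plan is to use the duality between abelian stacks and duabelian groups (Theorem \ref{thm:AbduaDuality}) to reduce triviality of $\A$ to the splitting of a short exact sequence of commutative group schemes over $\spec k$. By the preceding proposition, $F := H^{-1}(\A)$ is a diagonalizable finite commutative group scheme. Since $k$ is algebraically closed, $F \cong D(M)$ for a finite abelian group $M$, and its Cartier dual $F^D \cong M$ is a constant étale group scheme. Theorem \ref{thm:dualAbStack} then presents $\dua(\A)$ as an extension
$$0 \to A^t \to \dua(\A) \to M \to 0$$
of commutative group schemes on $\spec k$, where $A^t$ is the dual of the abelian variety $A = H^0(\A)$.

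The heart of the argument is to show this extension splits, i.e., that $\Ext^1(M, A^t) = 0$ in the fppf topology on $\spec k$. Decomposing $M$ into cyclic summands reduces the computation to $M = \bZ/n$. Applying $\hom(-, A^t)$ to the resolution $0 \to \bZ \xrightarrow{n} \bZ \to \bZ/n \to 0$, the sheafy $\mathcal{E}xt^1(\bZ/n, A^t)$ is identified with the fppf cokernel of $[n]\colon A^t \to A^t$, which vanishes because $[n]$ is finite, flat, and surjective, hence an fppf epimorphism. The local-to-global spectral sequence then reduces the vanishing of the global $\Ext^1$ to that of $H^1_{fppf}(\spec k, A^t[n])$; this holds because every fppf cover of an algebraically closed point admits a section by the Nullstellensatz, so fppf cohomology of $\spec k$ vanishes in positive degrees.

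With the splitting $\dua(\A) \cong A^t \times M$ in hand, Brochard's biduality together with the compatibility of $\dua$ with products gives
$$\A \cong \dua(\dua(\A)) \cong \dua(A^t) \times \dua(M) \cong A \times BF,$$
exhibiting $\A$ as a trivial abelian stack with invariants $(A, F)$. The main technical obstacle is the careful control of the fppf $\Ext^1$: while the étale case (order coprime to the characteristic) is routine, the vanishing of $H^1_{fppf}(\spec k, A^t[n])$ in positive characteristic relies on the fact that $A^t[n]$, though possibly non-étale, still has trivial fppf cohomology over an algebraically closed point. The tameness hypothesis enters precisely by ensuring $F^D$ is constant étale, avoiding unipotent factors such as $\alpha_p$ for which the analogous Ext vanishing is more delicate.
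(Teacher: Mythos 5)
Your proof is correct and follows the same overall strategy as the paper: use Brochard's duality (\ref{thm:AbduaDuality}, \ref{thm:dualAbStack}) to translate triviality of $\A$ into the splitting of a short exact sequence of group schemes over $\spec k$, note that tameness forces the finite part to be a constant group $\prod_i \bZ/n_i\bZ$ after Cartier duality, and reduce to the vanishing of $\Ext^1(\bZ/n, -)$ with values in an abelian variety over $\bar{k}$. The one genuine difference is at the crux: the paper simply invokes Breen (Section 10 of \cite{Breen}) for $\Ext^1(\bZ/n\bZ, A)=0$ when $k=\bar{k}$, whereas you supply a self-contained derivation---apply $\mathcal{H}om(-,A^t)$ to $0\to\bZ\xrightarrow{n}\bZ\to\bZ/n\to 0$, observe that $\mathcal{E}xt^1(\bZ/n,A^t)$ is the fppf cokernel of $[n]$, which vanishes since $[n]$ is a finite flat surjection, and then use the local-to-global spectral sequence together with the fact that $H^1_{fppf}(\spec\bar{k},A^t[n])=0$ because every fppf cover of an algebraically closed point has a section. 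This unwinding is a nice addition; it makes the role of algebraic closedness transparent (in both the coprime and $p$-divisible cases) in a way the bare citation does not, at the cost of a slightly longer argument. (Minor cosmetic point: the paper parameterizes by the duabelian group $G$ with $\A=\dua(G)$ and the extension $0\to A\to G\to F\to 0$, while you work directly with $0\to A^t\to\dua(\A)\to F^D\to 0$; these are the same computation up to relabelling.)
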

\begin{proof}
Let $\A$ be a tame abelian stack over $k$. Then $\A=\dua(G)$ where $G$ is a duabelian group. As $G$ is duabelian we have an exact sequence
\[0\ra A\ra G\ra F\ra 0\]
where $A$ is an abelian variety over $k$ and $F$ is a finite commutative group scheme over $k$ by \ref{def:duabelianGr}. Furthermore, we have $F^D=H^{-1}(\A)$ by the proof of \cite[Theorem 3.17]{Brochard1}. As $\A$ is tame we have $H^{-1}(\A)$ must be linearly reductive. Therefore, we see that $F^D$ is diagonalizable. We conclude $F^D=\prod_{i}\mu_{n_i}$ by \cite[2.5]{Linred}. Taking the dual gives thats that $F=\prod_{i=1}\bZ/n_i\bZ$. We now compute 
\[\Ext^1(F,A)=\Ext^1(\prod_i\bZ/n_i\bZ,A)=\prod_i\Ext^1(\bZ/n_i\bZ,A).\]
By \cite[Section 10]{Breen} $\Ext^1(\bZ/n_i\bZ,A)=0$ when $k=\bar{k}$ and therefore we have $\Ext^1(F,A)=0$. We conclude that $G=A\times F$. Taking the dual gives $\dua(A\times F^D)=A^t\times B(F^D)$ and so $\A$ is trivial as claimed. 

\end{proof}
The tameness assumption is necessary.
\begin{example}
Let $S=\spec k$ where $k$ is algebraically closed of characteristic $p>0$. Let $A\ra S$ be an abelian variety. Following Oort in \cite[II.12-3]{OortBook} set $\tau(A)=\dim_k\hom(\alpha_p,A)$. Suppose that $\tau(A)>0$. The table in \cite[II.14-2]{OortBook} gives that $\Ext^1(\alpha_p,A)\cong k^{\tau(A)}$. Therefore, there are nontrivial extensions \[0\ra A\ra G \ra \alpha_p\ra 0\]
We conclude that there are nontrivial abelian stacks of the form $\dua(G)$. Note that Oort does not work in the category of fppf-sheaves, but by the remarks in the introduction in \cite{Breen} the calculation of $\Ext^1$ remains valid there over a field. So there are non-tame group stacks over algebraically closed fields of characteristic $p>0$.

\end{example}

\begin{proposition}\label{prop:constructionprop}
Let $A\ra S$ be an abelian scheme over a smooth connected Noetherian base. Fix $n\geq 2$ and assume that the multiplication by $n$ on sections \[n:A(S)\ra A(S)\]
is not surjective. Then $\Ext^1(\bZ/n\bZ,A)\neq 0$.
    
\end{proposition}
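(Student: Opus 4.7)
The plan is to apply $R\Hom(-,A)$ in the category of fppf sheaves of abelian groups on $S$ to the short exact sequence of constant sheaves
\[
0 \to \bZ \xrightarrow{n} \bZ \to \bZ/n\bZ \to 0
\]
and read off a long exact sequence of Ext groups. Via the connecting homomorphism, this will exhibit $A(S)/nA(S)$ as a subgroup of $\Ext^1(\bZ/n\bZ,A)$, and the hypothesis on multiplication by $n$ then forces the latter to be nonzero.

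Concretely, I would first identify $\Hom(\bZ,A)=A(S)$, using connectedness of $S$ so that the constant sheaf $\bZ$ has the expected global sections, and observe that the endomorphism of $\Hom(\bZ,A)$ induced by $n\colon\bZ\to\bZ$ is multiplication by $n$ on $A(S)$. The relevant fragment of the long exact sequence then reads
\[
A(S) \xrightarrow{n} A(S) \xrightarrow{\delta} \Ext^1(\bZ/n\bZ,A) \to \Ext^1(\bZ,A),
\]
yielding an injection $A(S)/nA(S) \hookrightarrow \Ext^1(\bZ/n\bZ,A)$ via $\delta$.

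The conclusion is then immediate: the hypothesis that $n\colon A(S)\to A(S)$ is not surjective says precisely that $A(S)/nA(S)\neq 0$, hence $\Ext^1(\bZ/n\bZ,A)\neq 0$. There is no real obstacle in this argument; the only thing requiring care is to pin down the ambient abelian category (fppf sheaves of abelian groups on $S$) so as to be consistent with the interpretation of $\Ext^1$ used elsewhere in the paper, in particular with the identification of classes in $\Ext^1(\bZ/n\bZ,A)$ with isomorphism classes of extensions $0\to A\to G\to \bZ/n\bZ\to 0$. Note that the smoothness and Noetherian hypotheses do not enter this step directly; they presumably serve to make the nonsurjectivity of $n$ on $A(S)$ easy to verify in the intended constructions of abelian stacks later in the section.
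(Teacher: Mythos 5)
Your argument is the paper's argument: apply $\Hom(-,A)$ to the short exact sequence $0\to\bZ_S\xrightarrow{n}\bZ_S\to(\bZ/n\bZ)_S\to 0$, identify $\Hom(\bZ_S,A)=A(S)$ with the induced map being multiplication by $n$, and read off a nonzero image in $\Ext^1((\bZ/n\bZ)_S,A)$ from the long exact sequence. Your additional remarks (fppf ambient category, the injection $A(S)/nA(S)\hookrightarrow\Ext^1$, the unused smoothness hypothesis) are consistent glosses rather than a different route.
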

\begin{proof}
Take the long exact sequence associated to
\[0\ra \bZ_S\ra \bZ_S\ra (\bZ/n\bZ)_S\ra 0\] and $\hom(-,A)$ associated to this sequence. We obtain
\[\hom(\bZ_S,A)\ra \hom(\bZ_S,A)\ra \Ext^1((\bZ/n\bZ)_S,A)\]
As $\hom(\bZ_S,A)=A(S)$ and the morphism between them is multiplication by $n$ on the sections. By assumption the map is not surjective, therefore we obtain non-trivial elements in $\Ext^1((\bZ/n\bZ)_S,A)$.

\end{proof}

\begin{example}\label{ex:tameexample1}
Fix a field $k$ non-algebraically closed and an abelian variety over $k$ such that $n:A(k)\ra A(k)$ is not-surjective. For example, if $k=\bQ$ take $A$ to be an elliptic curve with $A[2](\bQ)=\bZ/2\bZ$ and $n=2$. Alternatively take $k=\mathbb{F}_p$ and $n$ not divisible by $p$ with an $n$-torsion point. Then by proposition\ref{prop:constructionprop} we have a non-trivial extension
\[0\ra A\ra G\ra (\bZ/n\bZ)_k\ra 0\]
Dualizing gives an abelian stack $\dua(G)$ with $H^{-1}(\dua(G))=(\bZ/n\bZ)^D=\mu_n$.
\end{example}

\begin{example}\label{ex:tamexample2}
Work on a base scheme $S$ and choose an abelian scheme $A\ra S$ such that $\Ext^1(\mu_2,A)\neq 0$ as in example \ref{ex:tameexample1}. Fix an odd prime $p$ so that $\mu_{2p}=\mu_2\times \mu_p$. Therefore, $\Ext^1(\mu_{2p},A)=\Ext^1(\mu_p,A)\times \times\Ext^1(\mu_2,A) $. As $\Ext^1(\mu_2,A)\neq 0$, we have $\Ext^1(\mu_{2p},A)\neq 0$. Therefore, we have non-trivial extensions \[0\ra A\ra G\ra \mu_{2p}\ra 0\]
Taking the dual gives an abelian stack $\A=\dua(G)$ with $H^{-1}(\A)=(\mu_{2p})^D=\bZ/2p\bZ$.
\end{example}

\begin{example}\label{ex:tameex3}
Consider an abelian variety over a field $k$ with no $k$-rational $2$-torsion point and $2\colon A(k)\ra A(k)$ not surjective. Take the long exact sequence associated 
\[0\ra \mu_{2}\ra \mu_4\ra \mu_2\ra 0\]
 and $\hom(-,A)$ to obtain $0\ra \hom(\mu_2,A)\ra \hom(\mu_4,A)\ra \hom(\mu_2,A)\ra \Ext^1(\mu_2,A)\ra \Ext^1(\mu_4,A)$. By our assumptions, we have $\hom(\mu_2,A)=0$ and $\Ext^1(\mu_2,A)\neq 0$. Therefore, $\Ext^1(\mu_4,\neq 0)$. Now assume that we have shown that $\Ext^1(\mu_{2^{n-1}},A)$ is non-zero. We have a short exact sequence
     \[0\ra \mu_{2}\ra \mu_{2^n}\ra \mu_{2^{n-1}}\ra 0\]
 Taking the long exact sequence as before gives \[\hom(\mu_2,A)\ra \Ext^1(\mu_{2^{n-1}},A)\ra \Ext^1(\mu_{2^n},A)\]
 which shows that $\Ext^1(\mu_{2^n},A)\neq 0$. Therefore, we obtain examples of abelian stacks $\A$ with $H^{-1}(\A)=\bZ/2^n\bZ$.
\end{example}

\end{section}

\begin{section}{The Poincare Bundle}

In this section, we discuss the Poincare bundle on an abelian stack. Our approach will be to show that the study of the Poincare bundle on an abelian stack can be reduced to the study of the Poincare bundle on the coarse moduli space, where we may apply the results of Mukai on relative Fourier-Mukai transforms.

\begin{definition}[The Poincare bundle]\label{def:PoBundle}
Let $G$ be a commutative group stack that is flat, proper, and finitely presented over $S$. Assume in addition that the inertia stack of $G$ is finite, flat, and finitely presented over $G$.
Then by \cite[Theorem 3.14]{Brochard1} $\dua(G)=\Hom_{\textnormal{grp}}(G,B\bG_m)$ is an algebraic stack. By \cite[Definition 4.1]{Brochard1} 
there is a canonical evaluation morphism

\begin{align}
\eval_G\colon G\times_S\dua(G)&\ra B\bG_m\\
(g,\varphi)&\mapsto \varphi(g)
\end{align}
We define the Poincare bundle on $G\times \dua(G)$ by
\begin{equation}
\P_G=\eval_G^*(\L_{B\bG_m})
\end{equation}
where $\L_{B\bG_m}$ is the universal line bundle on $B\bG_m$. 
\end{definition}
The objects of $\dua(G)(T)$ can be thought of as pairs $(L,\alpha)$ where $L$ is a line bundle on $G\times_S T$ and 
$\alpha\colon m_G^*L\ra p_1^*L\otimes p_2^* L$
 an isomorphism satisfying some coherence conditions; see the discussion preceding \cite[Theorem 3.14]{Brochard1}.
Here $m_\G\colon \G\times_S \G\ra \G$ is the multiplication and $p_i\colon \G\times_S \G\ra \G$ the projections. 

Let $M\ra S$ be an abelian scheme, and $M^t\ra S$ its dual. The normalized Poincare bundle on $M\times_S M^t$ is a line bundle $P_M$ such that if $L$ is a line bundle on $M\times_S T$ that is numerically trivial, then $L=(1\times h)^*P_M\otimes f_T^* N$ where $h\colon T\ra M^t$ is the morphism giving $L$ and $N$ 
is some line bundle on $T$. Normalized refers to the fact that $P\vert_{0_M\times M^t}=\O_{M^t}$ and $P\vert_{M\times 0_{M^t}}=\O_A$. For an abelian scheme, a normalized Poincare bundle always exists as the Picard scheme exists, as the morphism is cohomologically flat and there is also a section (see Chapter 8 of\cite{BLR} for details.)
  We can immediately prove that this notion generalizes the classical notion.

\begin{proposition}\label{prop:AbUniPoincare}
Let $M$ be an abelian scheme over a base $S$ and $M^t$ the dual abelian scheme. Then $\P_M$ is the normalized Poincare bundle. 
\end{proposition}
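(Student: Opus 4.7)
The plan is to reduce the statement to the universal property of the dual abelian scheme $M^t$ as the moduli space of rigidified numerically trivial (equivalently, primitive) line bundles on $M$, and then verify the normalization by unwinding the evaluation morphism at the identity sections.

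First I would identify $\dua(M)$ with $M^t$. Viewing the abelian scheme $M$ as a duabelian group of the form $0\to M\to M\to 0\to 0$, Theorem \ref{thm:AbduaDuality} (i.e.\ Brochard's Theorem 3.17) gives that $\dua(M)$ is an abelian stack with $H^{-1}(\dua(M))=0^D=0$ and $H^0(\dua(M))=M^t$; thus $\dua(M)=M^t$ canonically as $S$-schemes, and so $\P_M$ is indeed a line bundle on $M\times_S M^t$.

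Next I would match the functor-of-points description of $\dua(M)$ with the Picard functor represented by $M^t$. For a test scheme $T\to S$, a section of $\dua(M)(T)$ is a homomorphism $\varphi:M_T\to B\bG_m$, equivalently (by pullback of $\L_{B\bG_m}$) a line bundle $L_\varphi$ on $M_T$ together with an isomorphism $m_M^*L_\varphi\cong p_1^*L_\varphi\otimes p_2^*L_\varphi$ satisfying the cocycle conditions, as recalled after Definition \ref{def:PoBundle}. The existence of such a rigidification forces $L_\varphi$ to be numerically trivial on each geometric fibre of $M_T\to T$, and conversely any rigidified numerically trivial line bundle on $M_T$ arises in this way. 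This is precisely the universal property of $M^t$ as the dual abelian scheme (see \cite{BLR} Chapter 8). The identity morphism $M^t\to M^t=\dua(M)$ therefore corresponds, via $\eval_M$, to the universal such line bundle on $M\times_S M^t$, which by definition is the unique normalized Poincar\'e bundle up to isomorphism once we verify the two rigidifications.

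Finally I would check the normalizations $\P_M|_{0_M\times M^t}\cong \O_{M^t}$ and $\P_M|_{M\times 0_{M^t}}\cong \O_M$. The first follows because the composition
\[
M^t\xrightarrow{(0_M,\iden)} M\times_S M^t\xrightarrow{\eval_M} B\bG_m
\]
sends a homomorphism $\varphi$ to $\varphi(0_M)$, and any homomorphism of commutative group stacks into $B\bG_m$ sends the identity section to the trivial object of $B\bG_m$ (canonically, by the compatibility data $\alpha_\varphi$ from Remark \ref{rem:idrem}); pulling back $\L_{B\bG_m}$ along this constant map gives $\O_{M^t}$. The second restriction is handled analogously: the zero section $S\to M^t$ classifies the trivial (constant) homomorphism $M\to B\bG_m$, so the composite $M\to M\times_S M^t\to B\bG_m$ again factors through the identity of $B\bG_m$, and $\eval_M^*\L_{B\bG_m}$ pulls back to $\O_M$.

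The only genuinely delicate point is the equivalence between $\Hom_{\rm grp}(M_T,B\bG_m)$ and rigidified primitive line bundles on $M_T$ in a way that matches the evaluation construction with the universal property of $M^t$; this is the step where one must be careful to track the cocycle isomorphism $\alpha$ and the rigidification along $0_M$ simultaneously, rather than merely as isomorphism classes. Once those identifications are made coherent, the rest is a direct unwinding of definitions.
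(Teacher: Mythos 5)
Your proof is correct and checks the two normalization conditions in essentially the same way as the paper (pulling back $\L_{B\bG_m}$ along the constant map to $B\bG_m$ determined by the zero section or the trivial homomorphism). Where you differ is in the verification of the universal property. You first identify $\dua(M)$ with $M^t$ via Brochard's Theorem 3.17, then argue directly on the functor of points: a $T$-point of $\dua(M)$ is a line bundle $L$ on $M_T$ with a multiplicative isomorphism $m_M^*L\cong p_1^*L\otimes p_2^*L$, which you identify with the rigidified primitive line bundles classified by $M^t$; the identity map $M^t\to\dua(M)$ then tautologically pulls back $\L_{B\bG_m}$ to the universal object. The paper instead restricts $\P_M$ to the fibers $\sigma\times M^t$ of the projection to $M$ and observes that the resulting family of line bundles on $M^t$ recovers $M$ via the biduality isomorphism $(M^t)^t\cong M$. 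Your route is closer to the standard statement of the universal property of the Picard scheme and is arguably cleaner; the paper's route leans on biduality and is somewhat terser. You correctly flag the one subtle point — keeping track of the rigidification and the cocycle isomorphism simultaneously rather than just isomorphism classes — which the paper glosses over.
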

\begin{proof}
Note that $\P_M\vert_{0_M\times_S \dua(M)}$ corresponds to the morphism to the evaluation at $0_M$ homomorphism 
$M^t\ra B\bG_m$ which is the trivial homomorphism. Therefore, $\P_M\vert_{0_M\times_S \dua(M)}$ is trivial. 
On the other hand $\P_M\vert_{M\times 0_{M^t}}$ corresponds to evaluating the trivial homomorphism, which is trivial.

 On the other hand, $P_M$ satisfies the universal property of the Poincare bundle. Given a section $\sigma\colon S\ra M$ 
 we have that the restriction to the fiber $\P_M\vert_{\sigma\times M^t}$ is given by the group homomorphism $\eval(\sigma,\bullet)\colon M^t\ra B_S\bG_m$. However, as the evaluation morphism gives a natural isomorphism between $(M^t)^t\cong M$ we see that the family of lines bundles on $M^t$ given by restricting $\P_M$ to the fibers of $M\times_S M^t\ra M$ determines $M$.
\end{proof}

We now prove a series of lemmas that will allow us to work with the Poincare bundle on tame abelian stacks. Abelian stacks are gerbes over their coarse moduli space as they have finite and flat inertia. We will require the following basic results.

Throughout $A\rightarrow S$ will be an abelian stack with coarse moduli map $f:A\rightarrow M$ so that $M$ is an abelian scheme. Recall that Theorem \ref{thm:dualAbStack}
gives a canonical closed immersion $i:M^t\hookrightarrow \dua(A)$. 

\begin{proposition}\label{prop:transposeCommute}
	In the above setting, we have a commutative diagram:

	\begin{center}	
		\begin{tikzcd}
			A \times_S M^t \arrow[r, "1\times i"] \arrow[d, "f\times 1"] & A  \arrow[d, "ev"] \times_S \dua(A) \\ 
			M \times_S M^t \arrow[r, "ev"] & B\bG_m. 
		\end{tikzcd}
	\end{center}
\end{proposition}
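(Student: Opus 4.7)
\emph{Proof plan.} The commutativity of the square will reduce to a single identification: the closed immersion $i : M^t \hookrightarrow \dua(A)$ supplied by Theorem \ref{thm:dualAbStack} coincides with the map $\dua(f)$ obtained by applying the duality functor to the coarse moduli morphism $f : A \to M$. Concretely, on $T$-points this should send a homomorphism $\varphi : M_T \to B\bG_m$ to the pullback $\varphi \circ f_T : A_T \to M_T \to B\bG_m$. This identification can be read off from Brochard's proof of \cite[Theorem 3.17]{Brochard1}: functoriality of $\dua$ produces a canonical homomorphism $\dua(M) = M^t \to \dua(A)$, and inspecting the short exact sequence $0 \to M^t \to \dua(A) \to F^D \to 0$ shows it agrees with the inclusion appearing there. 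So my first task is simply to record this identification.

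Once $i = \dua(f)$ is in hand, the commutativity is essentially a tautology delivered by $2$-Yoneda. Pick a test scheme $T$ and a $T$-point $(a, \varphi) \in (A \times_S M^t)(T)$. Going right and then down gives
\[
	ev_A\bigl(a,\, i(\varphi)\bigr) \;=\; ev_A\bigl(a,\, \varphi \circ f_T\bigr) \;=\; (\varphi \circ f_T)(a),
\]
while going down and then right gives
\[
	ev_M\bigl(f_T(a),\, \varphi\bigr) \;=\; \varphi(f_T(a)).
\]
These two outputs agree on the nose, and functoriality of $\dua$ provides the coherence datum needed to promote this pointwise agreement to a $2$-isomorphism of morphisms $A \times_S M^t \to B\bG_m$ of stacks.

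The main obstacle I anticipate is purely bookkeeping: morphisms into $B\bG_m$ classify line bundles only up to canonical isomorphism, so ``commutativity'' of the square really means the existence of a natural invertible $2$-cell filling it, not strict equality. I expect the required $2$-cell to be precisely the structure $2$-morphism $\alpha_{\dua(f)}$ built into $\dua(f)$ as a homomorphism of commutative group stacks, but one has to keep track of it rather than pretend the diagram commutes on the nose. No further geometric input is needed beyond Theorem \ref{thm:dualAbStack} and the definition of the evaluation map.
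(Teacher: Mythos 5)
Your proposal is correct and follows essentially the same route as the paper: both reduce to the observation that the inclusion $i\colon M^t\hookrightarrow\dua(A)$ is pullback of line bundles along $f$ (i.e.\ $i=\dua(f)$), and then verify commutativity by chasing a $T$-point $(\phi,L)$ through both sides of the square and seeing that each yields $\phi^*f^*L$. The only difference is that you are more explicit about the $2$-categorical bookkeeping (naming the filling $2$-cell), which the paper leaves implicit; this is a harmless refinement rather than a different argument.
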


\begin{proof}
	A $T$-point of $A\times_S M^t$ amounts to a pair $(\phi, L)$ where $\phi$ is a $T$-point of $A$ and $L$ is a line bundle on $M$. 
Composing this $T$-point with $(1\times i)$ amounts to the pair $(\phi, f^*L)$ and then evaluating produces a $T$-point of $B\bG_m$ which is
given by the line bundle $(\phi^*f^*L)$

Now, composing the same $T$-point with the morphism $(f\times 1)$ produces the pair $(f\circ \phi, L)$, and hence the evaluation morphism produces the same line bundle $\phi^*f^* L$.
\end{proof}
 
\begin{corollary} \label{cor:pushforward}
	In the above situation, we have $(f\times 1)_*(1\times i)^* \P_A=\P_M$
\end{corollary}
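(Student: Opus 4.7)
The plan is to reduce the identity to the projection formula combined with the good-moduli-space property. First, I would apply the commutative square of Proposition \ref{prop:transposeCommute}: by definition $\P_A = \mathrm{ev}^* \L_{B\bG_m}$ and $\P_M = \mathrm{ev}^* \L_{B\bG_m}$, so pulling back along the two paths from $A\times_S M^t$ to $B\bG_m$ gives
\begin{equation*}
(1\times i)^* \P_A \;=\; (1\times i)^* \mathrm{ev}^* \L_{B\bG_m} \;=\; (f\times 1)^* \mathrm{ev}^* \L_{B\bG_m} \;=\; (f\times 1)^* \P_M.
\end{equation*}
Thus the claim is reduced to showing $(f\times 1)_* (f\times 1)^* \P_M = \P_M$.

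Next, I would invoke the projection formula for the morphism $f\times 1:\A\times_S M^t \to M\times_S M^t$. If this morphism satisfies $(f\times 1)_* \O_{\A\times_S M^t} = \O_{M\times_S M^t}$, then
\begin{equation*}
(f\times 1)_*(f\times 1)^* \P_M \;\cong\; \P_M \otimes (f\times 1)_*\O_{\A\times_S M^t} \;\cong\; \P_M,
\end{equation*}
which is what we want. Note that $\P_M$ is a line bundle, so it is certainly a suitable object for the projection formula.

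The remaining step is to verify that $(f\times 1)_*\O_{\A\times_S M^t} = \O_{M\times_S M^t}$. Since $f:\A\to M$ is assumed to be a good moduli space morphism, $f$ is cohomologically affine and satisfies $f_*\O_\A = \O_M$. Good moduli space morphisms are stable under arbitrary base change (this is one of their defining properties in Alper's theory), so base changing along the flat morphism $M^t\to S$ yields that $f\times 1$ is again a good moduli space morphism with $(f\times 1)_*\O_{\A\times_S M^t} = \O_{M\times_S M^t}$.

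I expect no real obstacle; the only point requiring a little care is invoking base change for the pushforward of the structure sheaf (one uses either the flatness of $M^t\to S$ combined with flat base change, or the general base change statement built into the definition of a good moduli space). Once that is granted, the projection formula for a line bundle finishes the argument.
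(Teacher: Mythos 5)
Your proof is correct and follows essentially the same route as the paper: the commutative square of Proposition~\ref{prop:transposeCommute} identifies $(1\times i)^*\P_A$ with $(f\times 1)^*\P_M$, after which the paper invokes Alper's Theorem~10.3, which encapsulates precisely the projection-formula argument you spell out (a good moduli map $\pi$ satisfies $\pi_*\O = \O$ and is cohomologically affine, hence $\pi_*\pi^*\cong\mathrm{id}$ on quasi-coherent sheaves, and $f\times 1$ remains a good moduli map after the base change $M\times_S M^t\to M$). You have simply made explicit what the paper leaves to the citation.
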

\begin{proof}
	The morphism $f$ is a good moduli map in the sense of \cite{GoodModuli1}. The result follows from \cite[Theorem 10.3]{GoodModuli1}.
\end{proof}

\begin{lemma}\label{lem:pshlem}
Let $S$ be a Noetherian scheme. Let $G\ra S$ be a finite-type, proper, and flat algebraic stack over $S$. Suppose that $\pi\colon G\ra M$ is a coarse space
map for $G$  so that $\pi$ is a good moduli space morphism and that $M$ is irreducible. Let $L$ be a line bundle on $G$. 
Suppose that there exists a field-valued point $x\colon \spec k \ra M$ with closed image such that $L$ has non-trivial inertial action at $x$. 
Then $\pi_* L=0$. 
\end{lemma}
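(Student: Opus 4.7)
The plan is to reduce the vanishing of $\pi_*L$ to a pointwise check at closed points of $M$ using the good moduli space property, and then to exploit the irreducibility of $M$ to propagate vanishing from $x$ to every closed point.

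First I would observe that $\pi$ is proper, so $\pi_*L$ is coherent on $M$. Since $\pi$ is a good moduli space morphism, $\pi_*$ is exact and its formation commutes with flat base change, by the results of Alper on good moduli spaces. In particular, for any closed point $y\colon \spec k(y) \ra M$ with residual gerbe $G_y \subset G$, there is a natural identification $(\pi_*L)\otimes_{\O_M} k(y) \cong H^0(G_y, L\vert_{G_y})$. Thus to show $\pi_*L = 0$ it suffices, by Nakayama applied to the coherent sheaf $\pi_*L$, to show that $H^0(G_y, L\vert_{G_y})$ vanishes for every closed point $y$.

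For the specified point $x$ this is immediate from the hypothesis: the inertia of $G_x$ acts on the one-dimensional fiber $L\vert_{G_x}$ through a non-trivial character, so the invariants (and hence $H^0$) vanish, giving $(\pi_*L)_x = 0$. To propagate this to every closed point of $M$, I would argue that the character of the inertia action on $L$ is locally constant on $M$. The inertia stack $I_G\ra G$ is finite flat of finite presentation, and the action of $I_G$ on $L$ defines a homomorphism of group schemes $I_G \ra \bG_m$ relative to $G$. Since characters into $\bG_m$ form a discrete invariant of a flat family of finite group schemes over a connected base, and $M$ is irreducible hence connected, this character is non-trivial at every closed point, not just at $x$. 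Consequently the invariants vanish everywhere, and $\pi_*L = 0$.

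The main obstacle will be justifying rigorously the local constancy of the character of the inertia action. In full generality this requires analyzing how the weights of the inertia vary over $M$, which may involve stratifying $M$ on which $G\ra M$ looks like a gerbe banded by a fixed group scheme. In the abelian-stack application the argument simplifies dramatically: the inertia is pulled back from $S$, so the character extracted from $L$ is a single homomorphism of $S$-group schemes, and rigidity is immediate from the connectedness of $M$. One convenient route for the stated lemma is to first restrict to a dense open subset on which $G\ra M$ becomes a gerbe banded by a fixed finite flat group scheme, establish vanishing there, and then invoke irreducibility together with coherence of $\pi_*L$ to conclude globally.
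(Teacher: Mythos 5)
The starting point of your argument (reduce to vanishing of $H^0$ on the residual gerbe $G_y$ for closed points $y$) is sound and matches the paper's use of cohomology and base change. However, you explicitly flag "local constancy of the character of the inertia action" as the main obstacle and do not resolve it; that is a genuine gap, and the proposed fallback (restrict to a dense open where $G\ra M$ is a gerbe banded by a fixed group, then conclude by irreducibility and coherence of $\pi_*L$) does not close it either, since a coherent sheaf can vanish on a dense open without vanishing globally unless one also knows it is torsion-free or locally free. The paper's proof sidesteps character propagation entirely: since $\pi$ is cohomologically affine, $R^i\pi_*L=0$ for $i>0$, and then cohomology and base change (Hall's Theorem A in \cite{Hall14}) upgrades $\pi_*L$ from merely coherent to a vector bundle on $M$. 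On the connected scheme $M$ a vector bundle has constant rank, and the rank at $x$ is $h^0(G_x,L\vert_{G_x})=0$ by the non-trivial inertia action; hence $\pi_*L=0$ with no need to say anything about the character at other points. So the extra input you are missing is precisely the local freeness of $\pi_*L$, which both removes the obstacle you identified and makes the "propagate the character" step unnecessary.
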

\begin{proof}
 Since $\pi$ is cohomologically affine, we have $R^i\pi_*L=0$ for all $i>0$. 
 In particular, $\pi_* L$ is a vector bundle on $G$ by \cite[Theorem A]{Hall14}.
As $M$ is connected, the vector bundle $\pi_* L$ has constant rank on $M$, so it suffices to show that it is $0$. But this follows by pulling back to the point $x$ and applying \cite[Theorem A]{Hall14} again. 
 \end{proof}

\begin{lemma}
	Let $G\rightarrow S$ be as in the previous lemma, with a good moduli map $f:G\rightarrow M$. Let $L$ be an invertible sheaf on $G$. 
	If $f_*(L)\ne 0$ then $L=f^*(F)$ for some $F\in{\rm Pic}(M)$.
\end{lemma}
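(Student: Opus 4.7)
The plan is to take the contrapositive of the previous lemma and then descend $L$ along $f$. Since $f_*L\neq 0$, the previous lemma forces $L$ to have trivial inertial action at every field-valued point $x\colon\spec k\to M$ with closed image; because $f$ is universally closed (inherited from properness of $G\to S$), this amounts to triviality of the inertial action at every closed point of $G$.

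Set $F:=f_*L$. Cohomological affineness of $f$ gives $R^if_*L=0$ for all $i>0$, so by \cite[Theorem A]{Hall14} (applied exactly as in the proof of the previous lemma) $F$ is a vector bundle on $M$. Irreducibility of $M$ forces this rank to be constant, and at a closed point $x\in M$ the rank equals the dimension of the space of stabilizer-invariants in $L|_{f^{-1}(x)}$. Because the inertial action is trivial at $x$, this dimension equals $1$, so $F$ is a line bundle on $M$.

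It then remains to show that the counit $\eta\colon f^*F\to L$ is an isomorphism. Both source and target are line bundles on $G$, so it suffices to check $\eta$ fiberwise, i.e.\ after restriction along each geometric fiber $f^{-1}(x)$ over a closed point $x\in M$. On such a fiber the formation of $f_*L$ commutes with base change (by cohomological affineness together with the vanishing $R^1f_*L=0$), so $\eta|_{f^{-1}(x)}$ is the tautological evaluation map from stabilizer-invariants of $L|_{f^{-1}(x)}$ back into $L|_{f^{-1}(x)}$. Triviality of the inertial action implies that $L|_{f^{-1}(x)}$ is a trivial line bundle on the residual gerbe, and the evaluation map of a trivial line bundle is an isomorphism.

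The main obstacle is to set up the fiberwise analysis cleanly, in particular to justify flat base change for $f_*$ along closed points of $M$ and to translate the pointwise statement of trivial inertial action into the global statement that $L$ restricted to a fiber $f^{-1}(x)$ is trivial as a line bundle. Both are standard facts from good moduli space theory: the first follows from the base-change properties of cohomologically affine morphisms, while the second uses that fibers of a good moduli map over closed points are gerbes banded by linearly reductive group schemes, whose line bundles are classified by the character of the inertia acting on a single fiber.
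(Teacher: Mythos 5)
Your proof is correct, but it takes a genuinely different route from the paper's. Both start the same way: the contrapositive of the previous lemma yields that the inertia action on $L$ is trivial at every closed point. From there the paper simply cites Alper's descent result \cite[Theorem 10.3]{GoodModuli1}, which states that under a good moduli map, quasi-coherent sheaves with trivial inertia action are pulled back from the base; this closes the argument in one line. You instead reprove the relevant special case of that theorem by hand: you take $F:=f_*L$, use cohomological affineness together with \cite[Theorem A]{Hall14} and irreducibility of $M$ to see $F$ is a line bundle, and then verify the counit $f^*F\to L$ is an isomorphism fiberwise over closed points of $M$, using that each such fiber is a gerbe banded by a linearly reductive group and that a character-trivial line bundle on such a gerbe is trivial. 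Your argument is more explicit and self-contained, essentially unwinding the mechanism behind Alper's theorem in the line-bundle case; the paper's approach is shorter by outsourcing the descent entirely. One small point worth tightening: when you pass from ``trivial inertia at closed points'' to ``the counit is an isomorphism,'' note that it suffices to check nonvanishing of $\eta$ (as a section of $(f^*F)^{-1}\otimes L$) at closed points of $G$, and universal closedness of $f$ guarantees these all lie over closed points of $M$; this is the clean justification for why the fiberwise check suffices.
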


\begin{proof}   
Since $f_* L\neq 0$ we have by lemma \ref{lem:pshlem} that the inertia action at all points is trivial for $L$. 
Then by \cite[Theorem 10.3]{GoodModuli1} we have $L=f^*F$ for some line bundle on $M$.	
\end{proof}

\begin{lemma}\label{lem:inertiaaction}
	Let $A$ be an abelian stack over a base scheme $S$. Let $f\colon A\ra M$ be the coarse space mapping of $A$. Let $P_A$ be the Poincare bundle on $A\times \dua(A)$. Let $i\colon M^t\ra \dua(A)$ be the inclusion in \ref{thm:dualAbStack}.
	 Let $(a,x)\in A\times_S \dua(A)$ be a field-valued point. Then $P_A\vert_{A\times x}$ has trivial inertial action if and only if $x\in M^t$.
\end{lemma}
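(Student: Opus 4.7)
The plan is to identify both conditions in the statement as a single vanishing condition with respect to the short exact sequence
\[0\ra M^t\ra \dua(A)\ra F^D\ra 0\]
from Theorem \ref{thm:dualAbStack}: namely, that the image of $x$ in $F^D$ is trivial. The forward direction will follow almost immediately from the commutative square of Proposition \ref{prop:transposeCommute}. The reverse direction rests on identifying the inertia character of the line bundle $P_A|_{A\times x}$ with the character $F\to \bG_m$ extracted from $x$ viewed as a homomorphism of commutative group stacks $\varphi_x\colon A\to B\bG_m$.

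For the forward direction, suppose $x=i(y)$ with $y\in M^t(k)$. Pulling back $\L_{B\bG_m}$ along the two sides of the square in Proposition \ref{prop:transposeCommute} gives the equality $(1\times i)^*\P_A = (f\times 1)^*\P_M$ on $A\times_S M^t$. Restricting to $A\times \{y\}$ yields
\[P_A|_{A\times x}\cong f^*\bigl(\P_M|_{M\times y}\bigr).\]
Because $f$ is a good moduli morphism, any line bundle pulled back via $f$ has trivial inertia action: the $F_k$-action on the fiber at any $a\in A(k)$ factors through the trivial automorphism of $f(a)\in M$.

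For the reverse direction, recall that a $k$-point $x\in \dua(A)(k)$ is precisely a homomorphism of commutative group stacks $\varphi_x\colon A_k\to B\bG_m$, and by definition $P_A|_{A\times x}\cong \varphi_x^*\L_{B\bG_m}$. Since $A$ is a commutative group stack, the inertia at every field-valued point of $A_k$ is canonically identified with $F_k$, and the action of $F_k$ on the fiber of $P_A|_{A\times x}$ is given by the character $\chi_x\colon F_k\to \bG_m$ obtained by restricting $\varphi_x$ to inertia. The map $\dua(A)\to F^D$ in Theorem \ref{thm:dualAbStack} is, by construction in \cite{Brochard1}, exactly the assignment $\varphi_x\mapsto \chi_x$. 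Hence trivial inertia action forces $\chi_x$ to be trivial, which forces $x\in \ker(\dua(A)\to F^D)=M^t$.

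The main subtle point, and where I expect the real work to lie, is justifying the identification of the pointwise inertia character of $P_A|_{A\times x}$ with the image of $x$ under $\dua(A)\to F^D$. This is essentially a bookkeeping exercise in Brochard's definitions: restricting the evaluation morphism $\eval_A\colon A\times_S \dua(A)\to B\bG_m$ along the inertia inclusion $I_A\times_S \dua(A)\hookrightarrow A\times_S \dua(A)$ factors through the inertia $\bG_m$ of $B\bG_m$, and the resulting map $F\times \dua(A)\to \bG_m$ coincides, via the projection $\dua(A)\to F^D$, with the universal pairing $F\times F^D\to \bG_m$. Once this compatibility is in hand, both directions follow formally from the exactness of the sequence in Theorem \ref{thm:dualAbStack}.
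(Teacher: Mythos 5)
Your proof is correct and takes essentially the same approach as the paper: both hinge on identifying the inertia character of $P_A\vert_{A\times x}$ with the restriction of $x\colon A\to B\bG_m$ to $H^{-1}(A)=F$, which is the image of $x$ under $\dua(A)\to F^D$, and then reading off the result from the exact sequence of Theorem~\ref{thm:dualAbStack}. Your forward direction via Proposition~\ref{prop:transposeCommute} is a minor alternative route, and you correctly flag as the real content the identification of the pointwise inertia character with the $F^D$-projection, which the paper asserts without elaboration.
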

\begin{proof}
	From theorem \ref{thm:dualAbStack} we have an exact sequence
	\begin{equation}\label{eq:Brochardeq1}
		0\ra M^t\ra \dua(A)\ra F^D\ra 0
	\end{equation}
	Note that $F=H^{-1}(A)$. The point $x$ gives a homomorphism $x\colon A\ra B\bG_m$. Restricting to $H^{-1}(A)$ 
	gives $x\colon F\ra \bG_m$. On the other hand, the restriction to $P_{A}\vert_{A\times x}$ is a line bundle on $A$. The inertia action at the closed point $a$ is precisely given by $x\colon F\ra \bG_m$. If $x\in M^t$ this is the zero morphism by equation \ref{eq:Brochardeq1} and so we have the trivial inertia action. Conversely, the trivial inertia action is precisely the same as saying that $x$ lies in the kernel the morphism described by equation \ref{eq:Brochardeq1}, which is the desired result. 
\end{proof}

\begin{corollary}\label{cor:supportlem}
	Let $A$ be an abelian stack over $S$. Let $f\colon A\ra M$ be the coarse space mapping of $A$. Let $P_A$ be the Poincare bundle on $A\times \dua(A)$. Let $i\colon M^t\ra \dua(A)$ be the inclusion of the dual of an abelian stack.
	
	Consider the morphism
	\[f\times 1\colon A\times \dua(A)\ra M\times \dua(\A). \]
	Then $(f\times 1)_*P_\A$ is set-theoretically supported on $M\times M^t$ in $M\times \dua(A)$.
\end{corollary}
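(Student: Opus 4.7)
The plan is to verify set-theoretic support by checking that $(f\times 1)_{*}P_{\A}$ has vanishing fiber at each field-valued point $(m,x)$ of $M\times \dua(\A)$ with $x\notin M^t$. Because $f\times 1$ is proper (inherited from the coarse moduli map $f$) and cohomologically affine, its pushforward of the line bundle $P_{\A}$ is coherent, so set-theoretic support is detected fiberwise by Nakayama. It therefore suffices to show that for every field-valued point $x\colon \spec k\to \dua(\A)$ that does not factor through $M^t$, the restriction of $(f\times 1)_{*}P_{\A}$ to $M\times\{x\}$ vanishes on $M_k$.

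First I would reduce to a fiberwise computation via base change. The morphism $f\times 1$ is the base change of $f$ along the projection $M\times \dua(\A)\to M$, so by the preservation of good moduli morphisms under base change \cite[Proposition 4.7]{GoodModuli1} it is itself a good moduli morphism. Pulling back along the closed immersion $j\colon M\times\{x\}\hookrightarrow M\times \dua(\A)$ and invoking the base change compatibility of pushforward along a cohomologically affine good moduli morphism, I would obtain the identification
\[
j^{*}(f\times 1)_{*}P_{\A}\;\cong\;(f_{k})_{*}\bigl(P_{\A}|_{\A_{k}\times\{x\}}\bigr),
\]
where $f_{k}\colon \A_{k}\to M_{k}$ denotes the base change of $f$ to $\spec k$ and $\A_{k}\times\{x\}$ is viewed inside $\A\times \dua(\A)$. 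The problem thereby reduces to showing the vanishing of this latter pushforward on the abelian variety $M_{k}$.

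Finally I would appeal to the two lemmas already in place. Since $x$ does not factor through $M^t$, Lemma \ref{lem:inertiaaction} implies that the induced character $x\colon F_k\to \bG_{m,k}$ is non-trivial, so $P_{\A}|_{\A_{k}\times\{x\}}$ has non-trivial inertial action at every point of $\A_{k}$ (the action depends only on the character, not on the chosen base point). Because $M_{k}$ is an abelian variety over a field and hence irreducible, Lemma \ref{lem:pshlem} applied to the good moduli morphism $f_{k}$ forces $(f_{k})_{*}\bigl(P_{\A}|_{\A_{k}\times\{x\}}\bigr)=0$, completing the argument.

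The main obstacle I foresee is rigorously justifying the base change step along the possibly non-flat closed immersion $j$; in general, base change of pushforward is not automatic without flatness or tor-independence. The saving feature is that pushforward along a cohomologically affine good moduli morphism is exact and commutes with arbitrary base change on the target (which is precisely the content invoked above), so the natural base change map genuinely is an isomorphism rather than merely a natural transformation, and the reduction to Lemma \ref{lem:pshlem} goes through.
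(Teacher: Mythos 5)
Your proposal is correct and follows essentially the same strategy as the paper's own proof: reduce to a fiberwise computation over each field-valued point $x$ of $\dua(\A)$, apply Lemma \ref{lem:inertiaaction} to see that the inertia action on $P_{\A}|_{\A\times\{x\}}$ is non-trivial when $x\notin M^t$, and then invoke Lemma \ref{lem:pshlem} to conclude the pushforward vanishes. You spell out the base-change justification (good moduli pushforwards commute with arbitrary base change on the target) more explicitly than the paper does, which is a welcome clarification but not a different argument.
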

\begin{proof} 
	Consider a fiber over a closed point $(a,x)\in M\times M^t$. Now restrict to the fiber over $x$. If the inertia action of $P_A\vert_{A\times x}$ is non-trivial then $f_*(P_A\vert_{A\times x})=0$ by Lemma \ref{lem:pshlem}. 
	So, the support is contained in those points with a trivial inertia action. By lemma \ref{lem:inertiaaction} we see that this is precisely those points $x\in A^t$. 
\end{proof}

We next show that this in fact is a scheme-theoretic support. To do this,
it suffices to show that if $I$ is the ideal sheaf of $M^t\times M$ in $\dua(A)\times M$ then $I^2=I$. 

Let us record the following preliminary lemma.

\begin{lemma}{main one to check probably}\label{lem:FibralVanishing}
	Suppose that $\spec B\ra \spec R$ is a finite type morphism of Noetherian rings. Let $M$ be a finitely generated module on $B$. Suppose that for every prime ideal $\mathfrak{p}$ of $R$ we have $M\otimes_R k(\fp)=0$. Then $M=0$.  	
\end{lemma}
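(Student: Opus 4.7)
The plan is to argue by contradiction using Nakayama's lemma, once we identify the right prime of $R$ by pulling back from a point in the support of $M$. The finite generation of $M$ over the Noetherian ring $B$ makes $\mathrm{Supp}_B(M)$ a closed, non-empty subset of $\spec B$ when $M\neq 0$, which supplies a prime $\fq\subset B$ with $M_\fq\neq 0$. The natural prime of $R$ to feed into the hypothesis is then $\fp:=\fq\cap R$.

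Concretely, I would first assume $M\neq 0$ and pick any $\fq\in\mathrm{Supp}_B(M)$. Setting $\fp:=\fq\cap R$, the hypothesized identity $M\otimes_R k(\fp)=0$ persists after localization of $B$-modules at $\fq$, giving $M_\fq\otimes_R k(\fp)=0$. Because $R\setminus\fp$ lands in $B_\fq^\times$, the structure map $R\to B_\fq$ factors through $R_\fp$, and so the above tensor product simplifies to $M_\fq/\fp M_\fq$. Thus $M_\fq=\fp M_\fq$. Since $\fp B\subseteq \fq$, this gives $M_\fq=\fq B_\fq\cdot M_\fq$, and Nakayama's lemma applied to the finitely generated module $M_\fq$ over the local ring $B_\fq$ yields $M_\fq=0$, contradicting $\fq\in\mathrm{Supp}_B(M)$.

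There is no real obstacle to this argument: it is essentially a one-step Nakayama reduction, with the only nontrivial input being the choice $\fp=\fq\cap R$ so that $R\setminus\fp$ already consists of units in $B_\fq$. I would also note in passing that the finite type assumption on $\spec B\to\spec R$ is not strictly needed in this approach; only the Noetherian hypothesis (ensuring $\mathrm{Supp}_B(M)$ is closed) and the finite generation of $M$ over $B$ are used. Geometrically, the content is that the fibers of the coherent sheaf $\widetilde{M}$ on $\spec B$ over the points of $\spec R$ already detect vanishing of $M$, which is precisely what is needed for studying the scheme-theoretic support of the pushforward of the Poincare bundle along $f\times 1$.
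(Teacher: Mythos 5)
Your proof is correct and is essentially the paper's argument: contract a prime $\fq$ of $B$ to $\fp=\fq\cap R$, use $M\otimes_R k(\fp)=0$ to deduce $\fp M_\fq=M_\fq\subseteq \fq B_\fq M_\fq$, and apply Nakayama over the local ring $B_\fq$. You package the translation more cleanly via the factorization $R\to R_\fp\to B_\fq$ where the paper chases denominators explicitly, and your side observation that the finite-type hypothesis is superfluous is accurate (in fact neither Noetherianness nor finite type is used---only finite generation of $M$ over $B$).
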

\begin{proof}
	Choose a maximal ideal $\fn$ of $B$ and set $\fp=R\cap \fn$. By assumption $M\otimes_R k(\fp)=0$. Note that as an $R_\fp$ module we have \[M\otimes_R (R_\fp/\fm_\fp)\cong (M\otimes_R R_\fp)\otimes_{R_\fp} (R_\fp/\fm_\fp)\]
	As $M\otimes_R R_\fp\cong M_\fp$ we have $M\otimes _R (R_\fp/\fm_\fp)\cong M_\fp\otimes_{R_\fp}(R_\fp/
	\fm_\fp)\cong M_\fp/\fm_\fp M_\fp=0$. In other words $\fm_\fp M_\fp=M_\fp$. Therefore, given $m\in M$ there are $p_1,\ldots, p_s\in \fp, s^\prime_1,\ldots s^\prime_t\in R- \fp$ and $m_1,\ldots, m_s\in M$ and $s_1,\ldots, s_t\in R-\fp$ with
	\[\frac{m}{1}=\sum_{i=1}^t \frac{p_i}{s_i^\prime}\frac{m_i}{s_i}.\]
	Write $s^{\prime\prime}=\prod_{i=1}^ts_is_i^\prime$ and $s^{\prime\prime}_i=\dfrac{s^{\prime\prime}}{s_is_i^\prime}$. Then there is some $s^{\prime\prime\prime}\in R- \fp$ with 
	\[s^{\prime\prime \prime}s^{\prime\prime}m=s^{\prime\prime\prime}\sum_{i=1}^ts^{\prime\prime}_i p_im_i\]
	Now consider $M_\fn/\fm_\fn M_\fn$. We have that $s^{\prime\prime\prime},s^{\prime\prime}s^{\prime\prime}_i, s^{\prime}_i, s_i$ are all non-zero in $B-\fn$ because $\fn\cap R=\fp$. So we have that
	\[\frac{m}{1}=\sum_{i=1}^t \frac{p_i}{s_i^\prime}\frac{m_i}{s_i}=\dfrac{\sum_{i=1}^tp_im_is_i^{\prime\prime}}{s^{\prime\prime}}\]
	in $M_\fn$. If $s\notin B- \fn$. So we have that
	\[s^{\prime\prime\prime}s^{\prime\prime}sm=ss^{\prime\prime\prime}\sum_{i=1}^tp_im_is_i^{\prime\prime}\] which gives $\frac{m}{s}=\sum_{i=1}^t\frac{p_im_is_i^{\prime\prime}}{ss^{\prime\prime}}=\sum_{i=1}^t\dfrac{m_ip_i}{ss_i^\prime s_i}$. As $\frac{p_i}{s_is_i^\prime s}\in \fm_\fn$ we have $M_\fn=\fm_\fn M_\fn$. As $M_\fn$ is finitely generated over $B_\fn$ and $\fm_\fn M_\fn=M_\fn$ we have by Nakayama's lemma that $M_\fn=0$ for an arbitrary maximal ideal $\fn$ of $B$. Consequently $M=0$.  
		
\end{proof}

\begin{proposition} 
	Let $S$ be a Noetherian scheme. Consider an abelian stack $A\ra S$ with $H^{-1}(A)=F$ and $M$ its moduli space. Assume that $F$ is tame. 
	Let $\I$ be the ideal of $M^t$ inside $\fD(A)$. Then $\I^2=\I$. 
\end{proposition}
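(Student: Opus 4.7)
The plan is to argue the stronger statement that $M^t \hookrightarrow \dua(A)$ is an \emph{open} immersion in addition to being a closed one; once $M^t$ is clopen in $\dua(A)$, the equality $\I^2 = \I$ follows for free, because the ideal sheaf of a clopen subscheme is Zariski-locally generated by an idempotent.

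To carry out this plan I would first use the tameness of $F$ to upgrade the structure of $F^D$. Since $F = H^{-1}(A)$ is tame, the first proposition of Section \ref{sec:examples} tells us that the geometric fibers of $F$ are diagonalizable; Cartier-dualizing fiberwise, $F^D$ is a finite flat group scheme over $S$ whose geometric fibers are constant, hence étale, so in fact $F^D \to S$ is a finite étale morphism. Next I would feed this back into the exact sequence $0 \to M^t \to \dua(A) \to F^D \to 0$ from Theorem \ref{thm:dualAbStack}, which identifies $M^t$ with the fiber product of $\dua(A) \to F^D$ against the identity section $e \colon S \to F^D$. A section of an étale morphism is automatically an étale monomorphism and hence an open immersion, so $e$ is open; pulling back along $\dua(A) \to F^D$ shows that $M^t \hookrightarrow \dua(A)$ is also open.

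Finally, for any clopen subscheme $Z \subset X$ of an arbitrary scheme one has Zariski-local product decompositions $\O_X \cong \O_Z \times \O_{X \setminus Z}$, under which the ideal sheaf of $Z$ corresponds to the factor $\O_{X \setminus Z}$ and is generated by the idempotent $(0,1)$. Idempotent-generated ideals manifestly satisfy $\I^2 = \I$, which completes the argument. I do not anticipate a real difficulty here: the only place the hypothesis is used is in promoting $F^D$ from a general finite flat group scheme to an étale one, and everything after that is formal. An alternative route would deduce $\I^2 = \I$ by showing that $\I/\I^2$ vanishes on every fiber and invoking Lemma \ref{lem:FibralVanishing}, which may well be the path the authors take; however, the clopen observation above seems cleaner, and it places the entire weight of the argument on the single structural fact that tameness of $F$ makes $M^t \hookrightarrow \dua(A)$ a connected-component inclusion rather than an honest closed embedding.
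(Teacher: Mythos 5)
Your argument is correct, and it takes a genuinely different route from the one in the paper. The paper's proof applies Lemma~\ref{lem:FibralVanishing} to the cokernel $\C$ of $\I^2 \hookrightarrow \I$, reducing the vanishing of $\C$ to the case $S = \spec k$ a field, where it invokes the fact that $M^t$ is a geometric connected component of $\fD(A)$. You instead observe that the hypothesis already forces $M^t \hookrightarrow \dua(A)$ to be an open immersion over the original base $S$: tameness makes the geometric fibers of $F$ diagonalizable, Cartier duality turns those into constant (hence \'etale) geometric fibers of $F^D$, and since $F^D$ is finite flat and finitely presented this promotes $F^D \to S$ to a finite \'etale morphism by the usual Nakayama argument on $\Omega_{F^D/S}$; a section of a separated \'etale morphism is clopen, and $M^t = \ker(\dua(A) \to F^D)$ is precisely the base change of the identity section. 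The two approaches use the same essential structural input (tameness forces $F^D$ \'etale), but yours makes the clopen-ness explicit over $S$ rather than only fiberwise, avoiding the reduction step entirely. Your version also makes Corollary~\ref{cor:keycor} transparent, since the base change $M \times M^t \hookrightarrow M \times \fD(A)$ of a clopen immersion is again clopen, and it is worth noting that the paper's fibral criterion seems to require a flatness input to identify $\I \otimes_R k(\fp)$ with the ideal of the fiber, a subtlety your route sidesteps.
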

\begin{proof}
	Consider the exact sequence $0\ra \I^2\ra \I\ra \C\ra 0$. We claim that $\C=0$ which can be checked affine locally. Therefore, we may assume $S=\spec R$.
By the previous lemma it suffices to show that $C\otimes_R k(p)=0$ for every prime in $R$. 
Now $I\otimes_R k(p)$ is the ideal sheaf of $M^t\times_S \spec k(p)$ inside $\fD(A)\times_S \spec(k(p))$ and dualization is compatible with base change. 
Hence we are reduced to the case where $R$ is a field. This is straightforward, as $M^t$ is a geometric component of $\fD(A)$
\end{proof}

\begin{corollary}\label{cor:keycor}
	The ideal sheaf of $M\times M^t$ inside $M\times \fD(A)$ is idempotent. 
\end{corollary}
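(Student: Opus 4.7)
The plan is to deduce the corollary from the immediately preceding proposition by pulling back along the flat projection. Write $p_2 \colon M \times_S \fD(A) \to \fD(A)$ for the second projection. Since $M \to S$ is an abelian scheme, in particular flat, the morphism $p_2$ is flat.

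The first step is to identify the ideal sheaf $\J$ of $M \times_S M^t$ inside $M \times_S \fD(A)$ with the pullback of $\I$ under $p_2$. Concretely, $M \times_S M^t$ is the fiber product of the two closed immersions $M^t \hookrightarrow \fD(A)$ and $\operatorname{id}_M$, so the ideal sheaf of $M \times_S M^t$ inside $M \times_S \fD(A)$ is exactly $p_2^{-1}\I \cdot \O_{M\times_S \fD(A)}$, and since $p_2$ is flat this agrees with the sheaf-theoretic pullback $p_2^* \I$ viewed as a subsheaf of $\O_{M \times_S \fD(A)}$.

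The second step is to invoke flatness to commute pullback with the product of ideals. Because $p_2$ is flat, the functor $p_2^*$ is exact and preserves tensor products of quasi-coherent sheaves, so $p_2^*(\I^2) = (p_2^*\I)^2 = \J^2$. Applying the preceding proposition, which asserts $\I^2 = \I$, then yields
\[
\J^2 \;=\; p_2^*(\I^2) \;=\; p_2^*(\I) \;=\; \J,
\]
which is the desired idempotency.

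There is essentially no obstacle here: the only thing to verify carefully is that forming the ideal sheaf of a product of closed subschemes is compatible with flat base change, and that the proposition just proved applies base-wise. Both are standard, so the corollary is a formal consequence of the proposition.
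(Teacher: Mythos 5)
Your argument is correct and supplies exactly the reasoning the paper leaves implicit (the corollary is stated without proof): since $M\to S$ is flat, the projection $p_2\colon M\times_S\fD(A)\to\fD(A)$ is flat, so the ideal sheaf of $M\times_S M^t$ is $p_2^*\I$ and flatness gives $p_2^*(\I^2)=(p_2^*\I)^2$, whence idempotency follows from $\I^2=\I$.
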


\begin{corollary}
	Recall the cartesian diagram
	\begin{center}	
		\begin{tikzcd}
			A \times_S M^t \arrow[r, "1\times i"] \arrow[d, "f\times 1"] & A  \arrow[d, "f\times 1"] \times_S \dua(A) \\ 
			M \times_S M^t \arrow[r, "1\times i"] & A\times_S \fD(A). 
		\end{tikzcd}
	\end{center}
	We have $R(f\times 1)\P_A = (f\times 1)_*\P_A =  (1\times i)_*\P_M$
\end{corollary}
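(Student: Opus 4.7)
The plan is to establish the two equalities separately. The first, $R(f\times 1)_*\P_A=(f\times 1)_*\P_A$, asserts vanishing of the higher direct images; the second, $(f\times 1)_*\P_A=(1\times i)_*\P_M$, factors this pushforward through the closed immersion $1\times i\colon M\times_S M^t\hookrightarrow M\times_S\fD(A)$ and identifies the resulting sheaf with the Poincar\'e bundle of the coarse moduli space.

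For the first equality, since $f\colon A\to M$ is a good moduli space morphism it is cohomologically affine. Cohomological affineness is preserved under arbitrary base change, so $f\times 1\colon A\times_S\fD(A)\to M\times_S\fD(A)$ is likewise cohomologically affine, and hence $R^i(f\times 1)_*\P_A=0$ for every $i>0$.

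For the second equality, Corollary~\ref{cor:supportlem} shows that $(f\times 1)_*\P_A$ is set-theoretically supported on $M\times_S M^t$, and Corollary~\ref{cor:keycor} asserts that the ideal sheaf $\I$ of $M\times_S M^t$ in $M\times_S\fD(A)$ is idempotent. Since $\I$ is coherent with $\I^2=\I$, Nakayama's lemma gives that $\I$ is Zariski-locally generated by an idempotent element, so $M\times_S M^t$ is Zariski-locally a clopen component of $M\times_S\fD(A)$. A quasi-coherent sheaf set-theoretically supported on one such clopen component is annihilated by the complementary idempotent, hence is scheme-theoretically supported there. We therefore obtain $(f\times 1)_*\P_A=(1\times i)_*\G$ for a unique quasi-coherent sheaf $\G$ on $M\times_S M^t$. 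To identify $\G$, apply $(1\times i)^*$: on one side $(1\times i)^*(1\times i)_*\G=\G$ because $1\times i$ is a closed immersion; on the other, proper base change along the given cartesian square, combined with the vanishing from the first step, yields $(1\times i)^*(f\times 1)_*\P_A\cong(f\times 1)_*(1\times i)^*\P_A$, which equals $\P_M$ by Corollary~\ref{cor:pushforward}.

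The delicate point, I expect, is the passage from set-theoretic to scheme-theoretic support; the idempotent ideal result of Corollary~\ref{cor:keycor} is exactly what makes this upgrade possible, after which the identification of $\G$ with $\P_M$ is a formal consequence of base change and the computation already recorded in Corollary~\ref{cor:pushforward}.
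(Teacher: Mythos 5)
Your proof is correct and follows the same route as the paper's, spelling out the details --- the upgrade from set-theoretic to scheme-theoretic support via Corollary~\ref{cor:keycor}, then identification by base change and Corollary~\ref{cor:pushforward} --- that the paper's terse proof (which cites only Corollary~\ref{cor:pushforward} for the second equality) leaves implicit. One small point: the base-change isomorphism $(1\times i)^*(f\times 1)_*\cong(f\times 1)_*(1\times i)^*$ is best justified not by ``proper base change'' per se, but either by Alper's base-change theorem for good moduli space morphisms, or by flat base change once you observe that, the ideal being idempotent, $1\times i$ is a clopen (hence flat) immersion.
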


\begin{proof}
	The assertion that $R(f\times 1)\P_A = (f\times 1)_*\P_A$ follows from the fact that $f\times 1$ is cohomologically affine, as it is a good moduli map. The last assertion follows from Corollary \ref{cor:pushforward}.
\end{proof}

\begin{theorem} Continuing in the above situation. Let $\pi\colon \A\times_S \dua(\A)\ra \dua(\A)$. Let $\epsilon_{A^t}: S\ra \A^t$ be the identity section of $A^t$ 
	and $\epsilon_{\dua(\A)}$ the identity section of $\dua(\A)$. Then 
	$$R\pi_{\dua(\A)}\P_\A=\epsilon_{\dua(\A)*}\omega_{A^t/S}^{-1}[-g]$$ where $g$ is the relative dimension of $A\ra S$ and $\omega_{\A^t/S}$ the relative canonical bundle of $A^t/S$.
\end{theorem}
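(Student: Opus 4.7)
The plan is to factor the projection $\pi_{\dua(\A)}$ through the coarse moduli and reduce to the classical Mukai computation on $M\times_S M^t$. Concretely, I factor
\[
\pi_{\dua(\A)}\colon \A\times_S\dua(\A) \xrightarrow{f\times 1} M\times_S \dua(\A) \xrightarrow{q} \dua(\A).
\]
Since $f\times 1$ is cohomologically affine (it comes from the good moduli map $f$), we have $R(f\times 1)_* = (f\times 1)_*$, and by the corollary just proved, $(f\times 1)_*\P_\A \cong (1\times i)_*\P_M$, where $i\colon M^t\hookrightarrow \dua(\A)$ is the closed immersion of Theorem \ref{thm:dualAbStack}. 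Using that $q\circ(1\times i) = i\circ p_{M^t}$ for the projection $p_{M^t}\colon M\times_S M^t\to M^t$, and that $i$ is a closed immersion (so $Ri_*=i_*$), I obtain
\[
R\pi_{\dua(\A)*}\P_\A \;\simeq\; Rq_*(1\times i)_*\P_M \;\simeq\; i_*\,R{p_{M^t}}_*\P_M.
\]

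With the problem reduced to the abelian scheme $M\to S$ and its normalized Poincar\'e bundle $\P_M$ (Proposition \ref{prop:AbUniPoincare}), the next step is to apply Mukai's computation in its relative form for abelian schemes, which gives
\[
R{p_{M^t}}_*\P_M \;\simeq\; e_{M^t*}\bigl(\omega_{A^t/S}^{-1}\bigr)[-g],
\]
where $e_{M^t}\colon S\to M^t$ is the identity section and $g$ is the relative dimension. Since the identity section of $\dua(\A)$ factors as $\epsilon_{\dua(\A)} = i\circ e_{M^t}$ (because $M^t$ is the connected component of the identity inside $\dua(\A)$ by Theorem \ref{thm:dualAbStack}), applying $i_*$ to the displayed isomorphism yields
\[
R\pi_{\dua(\A)*}\P_\A \;\simeq\; \epsilon_{\dua(\A)*}\omega_{A^t/S}^{-1}[-g],
\]
as required.

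The principal obstacle is the invocation of the relative Mukai theorem. In the absolute case this is Mukai's original result; the relative version over a Noetherian base follows by cohomology-and-base-change applied to the flat proper morphism $p_{M^t}$ and the line bundle $\P_M$, together with the standard identification of the coefficient line bundle as $e_{M^t}^*\omega_{A^t/S}^{-1}$ (which amounts to computing $R^g p_{M*}\O_M$ for an abelian scheme and transporting it to the identity of $M^t$ via the Poincar\'e bundle). Everything else is formal manipulation of pushforwards along the good-moduli map $f$ and the closed immersion $i$.
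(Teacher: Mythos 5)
Your proposal is correct and follows essentially the same route as the paper: reduce to the coarse moduli space via the preceding corollary $(f\times 1)_*\P_\A \cong (1\times i)_*\P_M$, apply the relative Mukai computation (which the paper cites directly from Mukai's 1987 paper and Laumon rather than rederiving via cohomology-and-base-change), and transport back along the closed immersion $i\colon M^t\hookrightarrow\dua(\A)$. Your exposition makes the factorization $\pi_{\dua(\A)} = q\circ(f\times 1)$ and the identity $q\circ(1\times i)=i\circ p_{M^t}$ explicit, which is a helpful clarification of what the paper leaves implicit, but the mathematical content is the same.
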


\begin{proof}
We have $(f\times \iden_{\dua(\A)})_* \P_\A\cong (\iden_A\times i)_*(\P_A)$ by Corollary \ref{cor:keycor}. 
 
  Using the computation on page 519 of \cite{relFM} or \cite[Lemma 1.2.5]{MR1159232} we have that $Rp_{A^t*}\P_A=\epsilon_{A^t*}\omega_{A^t/S}^{-1}[-g]$ 
  which gives that that $Rp_{\dua(\A)*}\P_\A\cong i_*\epsilon_{A^t/S*}\omega_{A^t/S}^{-1}[-g]\cong \epsilon_{\dua(\A)*}\omega_{A^t/S}^{-1}$ as claimed. 
\end{proof}

\end{section}

\begin{section}{Proof of duality}
We first verify that the Poincare bundle gives an object of $\QC(\A\times_S \dua(\A))$
\begin{proposition}
		The Poincare bundle gives an object of $\QC(\A\times \dua(\A))$.
	\end{proposition}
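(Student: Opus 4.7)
The plan is to reduce the statement to the functoriality of $\QC(-)$ under pullback along morphisms of algebraic stacks. Specifically, one observes that by construction $\P_\A$ is defined as $\eval_\A^*(\L_{B\bG_m})$, where $\L_{B\bG_m}$ is the universal line bundle on $B\bG_m$. So the task is to confirm that pullback along $\eval_\A$ lands in $\QC(\A\times_S\dua(\A))$, and for this it suffices to know three things: that the source $\A\times_S\dua(\A)$ is an algebraic stack, that $\eval_\A$ is a genuine morphism of algebraic stacks, and that pullback along such a morphism yields a well-defined functor on the $\infty$-category of quasi-coherent sheaves.

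First I would check that $\A\times_S\dua(\A)$ is algebraic. The stack $\A$ is algebraic by definition (being an abelian stack in the sense of Definition \ref{def:AbelianStack}), and $\dua(\A)$ is algebraic by Brochard's Theorem \cite[Theorem 3.14]{Brochard1}; in fact Theorem \ref{thm:dualAbStack} shows $\dua(\A)$ is even a group scheme (duabelian), so the fiber product is algebraic. The evaluation $\eval_\A\colon \A\times_S\dua(\A)\to B\bG_m$ is a morphism of algebraic stacks by its very definition in Definition \ref{def:PoBundle}. Finally, $\L_{B\bG_m}$ is a line bundle on $B\bG_m$, hence tautologically an object of $\QC(B\bG_m)$.

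The last ingredient is the functoriality of $\QC$. From the definition in Section \ref{subsec:QC}, $\QC(X)=\lim_{\spec R\in \mathrm{Aff}/X}\QC(\spec R)$ in the $\infty$-category of $\infty$-categories, so any morphism of stacks $f\colon X\to Y$ induces a pullback $f^*\colon \QC(Y)\to \QC(X)$ by restriction of the indexing $\infty$-category of affine schemes over $Y$ to those over $X$. Applying this with $f=\eval_\A$ yields $\eval_\A^*\L_{B\bG_m}=\P_\A\in\QC(\A\times_S\dua(\A))$, which is the claim. There is no real obstacle here; the proposition is essentially a matter of unwinding definitions, and its purpose is to set the stage for the subsequent Fourier–Mukai-type computations where $\P_\A$ is used as an integral kernel.
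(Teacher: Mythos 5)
Your argument is correct, but it takes a genuinely different route from the paper's. The paper works directly with $\A\times_S\dua(\A)$: it invokes Lurie's spectral algebraic geometry (SAG~2.2.6) to say that the ordinary abelian category of quasi-coherent sheaves embeds in $\textnormal{Qcoh}_\infty(\A\times_S\dua(\A))$, and then cites SAG~10.1.1 and~6.2.4.1 to identify Lurie's $\textnormal{Qcoh}_\infty$ with the BFN limit $\QC(\A\times_S\dua(\A))$. That handles the passage from the ordinary line bundle $\P_\A$ to an object of the stable $\infty$-category head-on. You instead exploit functoriality: you push the ``ordinary sheaf gives an object of $\QC$'' step onto $B\bG_m$, declare $\L_{B\bG_m}\in\QC(B\bG_m)$ tautological, and then use that $\QC$ is contravariantly functorial (pullback along any morphism of stacks is defined by the universal property of the BFN limit) to conclude $\eval_\A^*\L_{B\bG_m}\in\QC(\A\times_S\dua(\A))$. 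This is logically sound and arguably more elementary since $\QC(B\bG_m)$ is concretely the $\infty$-category of $\bG_m$-representation complexes, in which the weight-one representation $\L_{B\bG_m}$ sits transparently. Be aware, though, that the word ``tautologically'' is doing the same conceptual work as the paper's two Lurie citations; if one wants a fully careful proof, the lift of an ordinary line bundle to the stable $\infty$-category still needs a sentence of justification, just now for $B\bG_m$ rather than $\A\times_S\dua(\A)$. The paper's route has the advantage of being uniform over any (spectral DM) stack and not relying on the explicit construction of $\P_\A$ as a pullback; yours has the advantage of locating the only nontrivial input in the simplest possible ambient stack.
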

	
	\begin{proof}
Consider $\A\times_S \dua(\A)$ as a spectral Deligne-Mumford stack in the sense of Lurie. In 2.2.6 in spectral algebraic geometry by Jacob Lurie it is shown that the infinity category of quasi-coherent sheaves on a spectral Deligne-Mumford stack $\textnormal{Qcoh}_{\infty}(\A\times_S \dua(\A))$ contains the category usual abelian category of quasi-coherent sheaves on $\A\times_S \dua(\A)$. In other words, $\P_\A$ defines an object of $\textnormal{Qcoh}_{\infty}(\A\times_S \dua(\A))$. By 10.1.1 in spectral algebraic geometry (see also 6.2.4.1 for additional details) 
  \[\textnormal{Qcoh}_{\infty}(\A \times_S \dua(\A))\cong \lim_{(\A\times_S \dua(\A))(R)} \mod_R=\QC(\A\times_S \dua(\A)) \]
	\end{proof}
	
	\begin{theorem}
		We have an equivalence of categories 
		\[
		\QC(\A)\simeq \QC(\dua(\A))\]
	\end{theorem}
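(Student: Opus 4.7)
The plan is to execute the two-step strategy outlined in the introduction: first use that $\A$ is perfect to conclude $\QC(\A)$ is self-dual as a $\QC(S)$-module, then use the Poincare bundle to identify this dual with $\QC(\dua(\A))$. By Corollary \ref{prop:abStPerfect}, $\A$ is a perfect stack and $\A \to S$ is a perfect morphism; by \cite[Corollary 4.8]{bfn}, this means $\QC(\A)$ is a self-dual dualizable object in the symmetric monoidal $\infty$-category of presentable stable $\QC(S)$-linear $\infty$-categories, with the self-duality implemented by pullback and pushforward along the diagonal $\Delta\colon \A \to \A \times_S \A$. Applying Theorem \ref{thm:perfbro} with trivial inertia also yields that $\dua(\A)$ is perfect and $\dua(\A) \to S$ is a perfect morphism, so by the base change theorems of \cite{bfn} the natural functor
$$\QC(\A) \otimes_{\QC(S)} \QC(\dua(\A)) \simeq \QC(\A \times_S \dua(\A))$$
is an equivalence. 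Under this identification the Poincare bundle $\P_\A$ provides a pairing $\mu_\P \colon \QC(\A) \otimes_{\QC(S)} \QC(\dua(\A)) \to \QC(S)$ given by tensoring with $\P_\A$ and pushing forward to $S$.

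Next I would produce a coevaluation using the Poincare bundle on $\dua(\A) \times_S \A$ (available via biduality, Theorem \ref{thm:AbduaDuality}), yielding a candidate morphism $\eta_\P \colon \QC(S) \to \QC(\dua(\A)) \otimes_{\QC(S)} \QC(\A)$. Verifying the zigzag identities for the pair $(\mu_\P, \eta_\P)$ then gives an equivalence $\QC(\dua(\A)) \simeq \QC(\A)^\vee \simeq \QC(\A)$. Via the integral-transform formalism these zigzags translate into a concrete kernel identification on $\A \times_S \A$: the key zigzag reduces to computing the Fourier-Mukai convolution $\P_\A *_{\dua(\A)} \P_\A$ and showing that, up to a shift by $[g]$ and a canonical twist by $\omega_{A^t/S}$, it represents the identity functor, i.e.\ is supported on the diagonal with the appropriate fiberwise structure. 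By base change and the projection formula, this reduces to the pushforward $R\pi_{\dua(\A)*}\P_\A = \epsilon_{\dua(\A)*} \omega_{A^t/S}^{-1}[-g]$ computed at the end of Section 5.

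The main obstacle is carrying out this convolution calculation rigorously in the stacky setting. The reductions established in Section 5, particularly Corollary \ref{cor:pushforward} and Corollary \ref{cor:keycor}, allow one to push forward along the good moduli map $\A \to M$ to the coarse moduli space and its dual $M^t$, where Mukai's classical result \cite{FM} identifies the analogous convolution as a shift and twist of the structure sheaf of the diagonal on $M \times_S M$. Lifting this identification to $\A \times_S \A$ -- using the cohomological affineness of $\A \to M$ and the compatibility of $\P_\A$ with $\P_M$ provided by Proposition \ref{prop:transposeCommute} -- completes the verification that the Poincare pairing is perfect, and therefore $\QC(\A) \simeq \QC(\dua(\A))$ as claimed.
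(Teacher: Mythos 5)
Your proposal matches the paper's proof in all essentials: both invoke \cite[Corollary 4.8]{bfn} to reduce to exhibiting $\QC(\dua(\A))$ as dual to the (self-dual, because $\A$ is perfect) category $\QC(\A)$, take the Poincar\'e bundle $\P_\A$ as the kernel for both unit and counit, and verify the zigzag by a kernel-convolution computation that uses the bilinearity of $\P_\A$, flat base change, and the pushforward identity $R\pi_{\dua(\A)*}\P_\A\simeq\epsilon_{\dua(\A)*}\omega_{A^t/S}^{-1}[-g]$ from the end of Section~5. One small correction: the composite functor computed in the paper is $F\mapsto(-1_{\dua(\A)})^*F\otimes\omega_{A^t/S}^{-1}[-g]$, so it is not merely a shift and twist but also involves $(-1)^*$; this is still an autoequivalence, so the duality (hence the equivalence) follows exactly as you intend.
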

	
	\begin{proof}
		By \cite[Corollary 4.8]{bfn}, it suffices to show that $\QC(\G)$ and $\QC(\dua(\G))$  are dual as objects in 
		is the infinity category of stable infinity categories. This amounts to constructing 
		unit and counit functors as in the proof of \textit{loc. cit.} In what follows we denote by $p_{ij}$  and $p_i$ the projection onto the 
		$i,j$ components of $\G\times_S\dua(\G)\times \G$ and $i$ components respectively. We let $q_i$ the two projections of $\A\times_S \dua(\A)$ the two projections and $r_i\colon \dua(\A)\times_S \dua(\A)\ra \A $. Both the unit $u$ and counit $c$ are Fourier-Mukai functors induced by the kernel $P_\A$. We obtain 
		functors 
		\begin{align*}
			u\times 1:&\QC(\dua(\A))\longrightarrow \QC(\dua(\A)\times \A\times \dua(\A)) \\
			&F\longmapsto p_3^*(F)\otimes p_{12}^*\ \P_\A \\
			1\times c:& \QC(\dua(A)\times \A\times \dua(\A))\longrightarrow \QC(\dua(\A)) \\
			& G\longrightarrow p_{1*}(G\otimes p^*_{23}\P_\A).
		\end{align*}
We first claim that
\begin{equation}\label{eq:eq1}
p_{12}^*\P_\A \otimes p_{23}^* \P_\A\cong (m\times 1_{\A})^*\P_\A
\end{equation}
As $\P_\A\cong \eval_\A^*(\L_{\bG_M})$ this may be checked on $\dua(\bG_m)\times_S B\bG_m\times_S \dua(\bG_m)$ where it follows from $\dua(B\bG_m)=\bZ_S$ and the fact that $m_{B\bG_m}^*\L_{B\bG_m}\cong p_1^*\L_{B\bG_m}\otimes p_2^* \L_{B\bG_m}$ where $m$ is the multiplication $B\bG_m\times B\bG_m\ra B\bG_m$ and $p_i$ the two projections. We also have
\begin{equation}\label{eq:eq2}
p_{13*}(m\times 1_{\A})^*\P_\A\cong m_{\dua(\A)}^*q_{2*}\P_\A
\end{equation}
This follows from flat cohomology and base change applied to the diagram
\[\xymatrix{\dua(\A)\times_S \A\times_S \dua(\A)\ar[rr]^{m\times \iden_\A}\ar[d]^{p_{13}} & & \A\times_S \dua(A)\ar[d]_{q_2}\\ \dua(A)\times_S \dua(A)\ar[rr]_{m_{\dua(\A)}} & &\dua(\A)}\]
With this in hand the composition is (all functors are derived)
		\begin{align*}
			F \mapsto & p_{1*}(p_3^*F \otimes p_{12}^*\P_\A \otimes p_{23}^* \P_\A) \\
			\simeq & p_{1*}(p_3^* F \otimes (m\times 1_{\dua(\A)})^*\P_\A) \\
			\simeq & r_{1*}p_{13*}(p_{13}^*r_2^* F\otimes (m\times 1)^* \P_\A)\\
             \simeq & r_{1*}(r_2^* F\otimes p_{13*}(m\times 1)^* \P_\A)\\ 
			\simeq & r_{1*}( (r_2^* F)\otimes m^*q_{2*}\P_\A) \\
			\simeq & r_{1*}(r_2^* F \otimes m^*\epsilon_* \omega_{A^t/S}[-g]))
			\simeq (-1_{\dua(\A)})^*F\otimes \omega_{A^t/S}^{-1}[-g]
		\end{align*}
		giving the equivalence as needed.
	\end{proof}

\end{section}

\end{document}